\newtheorem{theo}{Theorem}
\newtheorem{prop}[theo]{Proposition}
\newtheorem{lemma}[theo]{Lemma}
\newtheorem{rem}[theo]{Remark}
\newtheorem{fact}[theo]{Fact}
\newcommand{\R}{\ensuremath{\mathbb{R}}} 
\newcommand{\Var}{{\rm Var}} 
\def\id{\mathop{\rm Id}\nolimits}
\def\tr{\mathop{\rm tr}\nolimits} 
\def\eps{\varepsilon}
\newcommand{\upchi}{\raise1pt\hbox{$\chi$}}
\newcommand{\be}{\begin{equation}}
\newcommand{\ee}{\end{equation}}
\def\benu{\begin{enumerate}}
\def\eenu{\end{enumerate}}
\def\cost{\mathop{\rm \bf c}\nolimits}
\def\ncost{\mathop{\rm \bf \tilde c}\nolimits}
\def\costV{\mathop{\rm \bf c}_V\nolimits}
\def\costmin{{\rm \mathcal N}}
\def\F{\mathcal F}
\def\ds{\, d\sigma}
\def\sint{\int_{S^{n-1}}}
\def\smallint{\mbox{$\int$}}
\title{Transport inequalities for log-concave measures, quantitative forms and applications
}
\author{Dario Cordero-Erausquin 
 }
 \date{June, 2016}      
\begin{document}

\maketitle

\begin{abstract}
We review some simple techniques based on monotone mass transport that allow to obtain transport-type inequalities for any log-concave probability measure, and for more general measures as well. We discuss quantitative forms of these inequalities, with application to the  Brascamp-Lieb variance inequality.
\end{abstract}

\section{Introduction}

Throughout the paper we work, when needed, with some fixed scalar product $\cdot$ and Euclidean norm $|\cdot|$ on  $\R^n$.
Although our main motivation is to analyse log-concave densities, meaning densities of the form $e^{-V}$ with $V$ convex, our result apply to more general situations, regardless of the convexity of the potential $V$. We can often work with a locally Lipschitz function  $V:\R^n \to \R$ with the mild assumption that
\begin{equation}\label{hyp}
\int (1+|x|^2+ |\nabla V(x)|^2) e^{-V(x)}\, dx <+\infty.
\end{equation}
Actually, when $V$ is convex, we don't need these assumptions, but not much is lost by imposing it. Given such $V$, we introduced the probability measure $\mu_V$ defined by 
$$d\mu_V(x):= \frac{e^{-V(x)}}{\int e^{-V}}\, dx.$$
Note that the density is by assumption everywhere strictly positive.

Following Kantorovich's idea, given a function $\cost:\R^n \times \R^n \to \R$ (one  interprets $\cost(x,y)$ as the cost of moving a unit mass from $x$ to $y$ or of bringing back  a unit mass from $y$ to $x$), we can define a transportation cost $\mathcal W_{\cost}$ between two Borel probability measures $\mu$ and $\nu$ on $\R^n$ by
$$\mathcal W_{\cost}(\mu, \nu):= \mathcal W_{\cost(x,y)}(\mu, \nu):= \inf_\pi \iint_{\R^n\times \R^n}  \cost(x,y)\, d\pi(x,y)$$
where the infimum is taken over all probability measures $\pi$ on $\R^n\times \R^n$ projecting on $\mu$ and $\nu$, respectively. 
From the definition of $\mathcal W_{\cost}(\mu, \nu)$ and Fubini's theorem,  we see that it suffises that the cost is well defined on $(\R^n \setminus X )\times \R^n$ only, where $\mu(X)=0$. Under very mild hypothesis on $\cost$, one can prove that there exists a coupling $\pi$ which is optimal, that is which achieves the infimum above (see~\cite[Chapters~4 and~5]{V}).  The cost $\cost(x,y)=|y-x|^p$, $p\in [1, +\infty)$,
is used for the definition of the $L^p$-Kantorovich-Rubinstein (or Wassertein) distance
$$W_p(\mu, \nu):= \big(\mathcal W_{|x-y|^p} (\mu, \nu) \big)^{1/p}.$$

Recall that given  two probability measures $\mu$ and $\nu$ on $\R^n$, the relative entropy of $\nu$ with respect to $\mu$ is defined by
$$H(\nu ||\mu):=
\begin{cases}
\int f\log(f) \, d\mu & \textrm{ if } d\nu(x) = f(x)d\mu(x) \textrm{ with } f\log_+ (f) \in L^1(\mu)\\
+\infty & \textrm{otherwise} 
\end{cases}.$$
Accordingly, we should only consider probability measures that have a density, in short "absolutely continuous" probability measures. 
Recall also that the variance of a function $g\in L^2(\mu)$ is defined by 
$$\Var_\mu(g):= \int \Big(g - \int g\, d\mu\Big)^2 \, d\mu.$$

The inequality in the next Proposition appeared in~\cite{BL} where it was derived in the dual form~\eqref{ic} as a consequence of the Pr\'ekopa-Leindler inequality. By now it is folklore in optimal mass transportation theory and known to most specialists. The investigation of equality cases seems to be new. 

\begin{prop}\label{thm1}
Let $V:\R^n \to \R$ be locally Lipchitz function satisfying~\eqref{hyp}.    Define for every $y$ and almost every $x$ in $\R^n$  the (asymmetric) cost
\begin{equation}\label{defcost2}
\costV(x,y):= V(y) - V(x) - \nabla V(x) \cdot (y-x).
\end{equation}
Then, for every (absolutely continuous) probability measure $\nu$ on $\R^n$ we have 
\begin{equation}\label{ineq:thm1}
\mathcal W_{\costV}(\mu_V, \nu) \le H(\nu ||\mu_V) .
\end{equation}
Moreover, when $V$ is convex,  equality holds if and only if $\nu$ is a translate of $\mu_V$.
\end{prop}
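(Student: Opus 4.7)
The plan is to combine Brenier's optimal transport map with the Monge--Ampère equation, following the scheme that underlies Talagrand-type transport inequalities. Writing $d\nu=f\,d\mu_V$, let $T=\nabla\varphi$ be the Brenier map pushing $\mu_V$ onto $\nu$ with $\varphi$ convex. Using the coupling $(\id,T)_\#\mu_V$ gives
\[
\mathcal W_{\costV}(\mu_V,\nu)\le\int\bigl[V(T(x))-V(x)-\nabla V(x)\cdot(T(x)-x)\bigr]\,d\mu_V(x),
\]
while the Monge--Ampère identity $f(T(x))\,e^{-V(T(x))}\det DT(x)=e^{-V(x)}$ (with $DT=D^2\varphi$ in McCann's a.e.\ sense) rewrites the entropy as
\[
H(\nu\|\mu_V)=\int\bigl[V(T(x))-V(x)-\log\det DT(x)\bigr]\,d\mu_V(x).
\]
Subtracting, \eqref{ineq:thm1} reduces to $\int\log\det DT\,d\mu_V\le\int\nabla V(x)\cdot(T(x)-x)\,d\mu_V(x)$.

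The right-hand side is evaluated by integration by parts against $e^{-V}$: from $\nabla V\,e^{-V}=-\nabla e^{-V}$ one obtains $\int\nabla V\cdot T\,d\mu_V=\int\tr DT\,d\mu_V$ and $\int\nabla V\cdot x\,d\mu_V=n$, so the problem reduces to the pointwise bound $\log\det DT(x)\le\tr DT(x)-n$, which is $\log t\le t-1$ applied to the (nonnegative, by convexity of $\varphi$) eigenvalues of $DT=D^2\varphi$.

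For the equality case with $V$ convex, equality in \eqref{ineq:thm1} forces $\log\det DT=\tr DT-n$ $\mu_V$-a.e., hence each eigenvalue of $D^2\varphi$ equals $1$; convexity of $\varphi$ then yields $\varphi(x)=\tfrac12|x|^2+a\cdot x+\mathrm{const}$, so $T(x)=x+a$ and $\nu$ is a translate of $\mu_V$. Conversely, for $\nu=\mu_V(\cdot-a)$ the Brenier map is $T(x)=x+a$ and a direct computation (using $\int\nabla V\cdot a\,d\mu_V=0$) gives $\int\costV(x,x+a)\,d\mu_V=H(\nu\|\mu_V)=\int[V(x+a)-V(x)]\,d\mu_V$; the remaining optimality of the translation coupling, needed to conclude $\mathcal W_{\costV}=H$, follows from the identity $\int\costV\,d\pi=H(\nu\|\mu_V)+n-\int\nabla V(x)\cdot y\,d\pi$ combined with a Brenier-type argument showing that the correlation $\int\nabla V(x)\cdot y\,d\pi$ is maximized at $n$ by the translation (the corresponding map $y=\nabla V^*(\nabla V(x))+a$ is the gradient in $u=\nabla V(x)$ of the convex function $V^*(u)+a\cdot u$).

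The main technical obstacle is the limited regularity available under the mild hypothesis \eqref{hyp}: the Monge--Ampère equation holds only $\mu_V$-a.e., $\log\det D^2\varphi$ must be interpreted as McCann's absolutely continuous part of the distributional Hessian, and the integration by parts relies on the integrability of $|\nabla V|^2e^{-V}$ to control boundary terms. Passing from $D^2\varphi=\id$ a.e.\ to the global rigid form $\varphi=\tfrac12|\cdot|^2+\mathrm{affine}$ in the equality analysis also relies essentially on the convexity of $\varphi$ to exclude singular contributions.
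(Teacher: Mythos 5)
Your proof of the inequality~\eqref{ineq:thm1} is essentially the paper's: Brenier map, Monge--Amp\`ere identity, integration by parts against $e^{-V}$, and the pointwise bound $\log\det\le\tr-n$ (the paper phrases the last step as the nonnegativity of $\int\tr(\F(D^2\theta))\,d\mu_V$ with $\F(t)=t-\log(1+t)$, which it keeps as a remainder for later use). That part is fine, modulo the same regularity caveats the paper itself has to address.

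The equality analysis is where there is a genuine gap, in the forward direction. From equality you correctly deduce that the absolutely continuous (Aleksandrov) part of $D^2\varphi$ equals $\id$ a.e., but the step ``convexity of $\varphi$ then yields $\varphi=\tfrac12|x|^2+a\cdot x+\mathrm{const}$'' is false as stated: the distributional Hessian of a convex function is a matrix-valued measure whose singular part is invisible to the Aleksandrov Hessian. For instance $\varphi(x)=\tfrac12|x|^2+|x_1|$ has $D^2\varphi=\id$ a.e.\ yet $T=\nabla\varphi$ is a genuinely non-affine (discontinuous) map pushing $\mu_V$ onto an absolutely continuous measure. Convexity does not ``exclude singular contributions''; what kills them is a separate argument. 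One cannot invoke the $W^{2,1}_{\rm loc}$ regularity either, since for the equality case you must work with the actual $\nu$ (whose density need not be continuous, positive, or have convex support), not with the nice approximations used to prove the inequality. A repair along your lines would require tracking the singular part of $\Delta\theta$ through the integration by parts (it contributes a nonnegative extra term to $H(\nu\|\mu_V)$, so equality forces it to vanish as well, and only then is $\theta$ affine). The paper avoids all of this by taking a completely different route: a translation-invariance lemma (proved via $\costV$-cyclical monotonicity) reduces to the centered case, where the quantitative Theorem~\ref{thm2} --- whose remainder is bounded below using the Cheeger constant $h(\mu_V)>0$ of the log-concave measure --- forces $W_1(\mu_V,T_v\nu)=0$ directly, with no rigidity statement about the Brenier map needed.

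For the converse (translates give equality), your correlation-maximization argument is a different and workable idea: the identity $\int\costV\,d\pi=H+n-\int\nabla V(x)\cdot y\,d\pi$ together with the Fenchel--Young inequality $\nabla V(x)\cdot(y-a)\le V^*(\nabla V(x))+V(y-a)$ does show $\int\nabla V(x)\cdot y\,d\pi\le n$ for every coupling, with equality for the translation coupling; phrased this way it does not even require $V^*$ to be differentiable, which your ``gradient of $V^*(u)+a\cdot u$'' formulation implicitly assumes. The paper's version is shorter: for convex $V$ one has $\costV\ge0$ and $\costV(x,x)=0$, so $\mathcal W_{\costV}(\mu_V,\mu_V)=0=H(\mu_V\|\mu_V)$, and the translation-invariance lemma transports this equality to every translate. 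Note finally that the paper's observation that equality for $\nu=\mu_V$ \emph{requires} convexity of $V$ (Remark~\ref{rem:equality} and Lemma~\ref{convexity_of_V}) is not visible from your approach.
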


\begin{rem}\label{rem:equality}
Regarding the treatment of equality case, we can prove a sharper result. Namely, we will establish the following statement: Let $V:\R^n \to \R$ be locally Lipchitz function satisfying~\eqref{hyp}, and assume that $\mu_V$ has a positive Cheeger constant $h(\mu_V)>0$ (see definition below; this is the case when $V$ is convex). Then, there is equality in the transport inequality~\eqref{ineq:thm1} of Proposition~\ref{thm1} \emph{if and only if} $V$ is convex and  $\nu$ is a translate of $\mu_V$.

The fact that the convexity of $V$ is necessary for equality cases is reminiscent of the equality cases in the Brunn-Minkowski inequality.
\end{rem}

When $V$ is convex,  it is possible to define the cost for every $x$ (and not just for almost every $x$) by using the  subgradient $\partial V(x)$ at $x$ of the convex function $V$ (see~\cite{rockafellar} for background on subgradients):
$$\partial V(x) := \{w\in \R^n\; ; \ V(x+ h) \ge V(x) + w \cdot h,\  \forall h \in \R^n\}.$$ 
Indeed, the Proposition can then be stated with the following cost $\costV$ in place of~\eqref{defcost2}: 
\begin{equation}\label{defcost1}
\forall (x,y)\in \R^n \times \R^n, \qquad \costV(x,y):= \sup_{w \in \partial V(x)} \Big\{V(y) - V(x) - w \cdot (y-x)\Big\}.
\end{equation}
Recall that $V$ is locally Lipschitz and so differentiable $\mu$-almost-everywhere.

Note that when $V$ is convex,  we have $\costV(x,y) \ge0$ with $\costV(x,x)=0$  in~\eqref{defcost2} and~\eqref{defcost1};  when $V$ is strictly convex, $\costV(x,y)>0$ if $x\neq y$.

Let us mention that by a simple and standard dualization procedure for transportation inequalities (see~\cite{ledoux}), the statement of Proposition~\ref{thm1}  is equivalent to the following infimal convolution inequality: for every (bounded) function $g:\R^n \to \R$,
\begin{equation} \label{ic}
\int e^{Q_{\costV} (g)} \, d\mu_V \le e^{\int g\, d\mu_V}
\end{equation}
where 
\begin{equation} \label{defic}
Q_{\costV} (g) (y) := \inf_{x} \big\{ g(x) + \costV(x,y)\big\}.
\end{equation}
We should also mention that transportation cost inequalities of the form stated above  imply concentration of measure inequalities (for $\costV$-neighborhoods);  we refer to~\cite{ledoux} for details. 

The interest of the  statement in Proposition~\ref{thm1} resides in the fact that no uniform convexity of $V$ is needed. This is reminiscent of the  Brascamp-Lieb variance inequality~\cite{BL:1976} (anticipated in different context by H\"ormander), which states that for a $C^2$ smooth convex function $V:\R^n \to \R$ with $\int e^{-V}<+\infty$ we have, for every locally Lipschitz function $g\in L^2(\mu_V)$,
\begin{equation}\label{hbl}
\Var_{\mu_V} (g) \le \int (D^2V(x))^{-1} \nabla g(x) \cdot \nabla g(x) \, d\mu_V(x).
\end{equation}
Since the cost $\costV(x,y)$ in Proposition~\ref{thm1}  behaves, when $x$ and $y$ are close to each other, like $\frac12 D^2 V(x) (y-x) \cdot (y-x)$, it follows by a standard linearization argument that Proposition~\ref{thm1}   implies the Brascamp-Lieb inequality~\eqref{hbl}. We shall recall the argument later. 

Another interesting feature of Proposition~\ref{thm1}  is that it is an \emph{affinely invariant} statement, in the sense that it does not depend on the Euclidean structure we put on $\R^n$. More precisely, we don't need a scalar product in the statement: the gradient $w=\nabla f(s)$ (or a subgradient) comes from a linear form $\ell=df(x)\in (\R^n)^\ast$, and we can use $\ell(y-x)$ in place of $w\cdot (y-x)$. This reflects also in the fact that the Brascamp-Lieb inequality~\eqref{hbl} shares the same affine invariance: if $\varphi:\R^n \to \R^n$ is an (invertible) affine map, then the functions $V_\varphi = V\circ \varphi^{-1}$ and $g_\varphi= g\circ\varphi^{-1}$ satisfy $\Var_{\mu_{V_\varphi}} (g_\varphi)=\Var_{\mu_V} (g)$ and 
$$ \int (D^2V_\varphi(x))^{-1} \nabla g_\varphi(x) \cdot \nabla g_\varphi(x) \, d\mu_{V_\varphi}(x) = \int (D^2V(x))^{-1} \nabla g(x) \cdot \nabla g(x) \, d\mu_V(x).$$

Other consequences of Proposition~\ref{thm1}  are Talagrand's transportation inequalities for Gaussian like measures. Observe that for the standard Gaussian measure $\gamma$, when $V(x)=|x|^2/2$, we have
$$\costV(x,y)= |y-x|^2/2$$
and the inequality becomes exactly Talagrand's inequality~\cite{tala}: for every probability density $\nu$ on $\R^n$,
\begin{equation}\label{ineqtala}
\frac12 W_2^2(\gamma , \nu)\le  H(\nu || \gamma),
\end{equation}
with equality if and only if $\nu$ is a translate of $\gamma$.  More generally,  if $V$ is $C^2$ with $D^2 V \ge \lambda \id$ on $\R^n$ for some $\lambda>0$, then by second-order Taylor expansion we see that the cost satisfies 
$$\costV(x,y) \ge \lambda |y-x|^2/2,$$ 
and therefore we deduce that in this case, for every probability measure $\nu$ on $\R^n$ we have 
\begin{equation}\label{Ttransport}
\frac{\lambda}2 \, W_{2}^2(\mu_V, \nu) \le H(\nu ||\mu_V) .
\end{equation}
This inequality appeared  in~\cite{BL, OV, blower}. We refer to~\cite{ledoux, GL} for background and references on transportation inequalities.

The proof of Proposition~\ref{thm1}  is very short; it is a minor adaptation of the transportation proof of Talagrand's inequality~\eqref{Ttransport} given in~\cite{cordero}. With a little more effort one can actually prove a quantitative form of the inequality involving a remainder term. To state the result, we need some notation. Given a probability measure $\mu$ on $\R^n$, we denote by $h(\mu)$ the best (i.e. largest)  nonnegative constant for which the inequality
\begin{equation}\label{defcheeger}
h(\mu) \int \Big| g(x) - \int\! g\, d\mu \Big| \, d\mu(x) \le \int |\nabla g|\, d\mu
\end{equation}
holds for every smooth enough $g\in L^1(\mu)$. This constant, up to a factor $2$, is also known as the Cheeger isoperimetric constant. 

When $\mu$ is log-concave,  then it is known that $h(\mu)>0$, and  $h(\mu)^2$ is actually equivalent, up to an universal constant,  to the spectral gap of the Laplacian associated to $\mu$ (or equivalently the inverse of the Poincar\'e constant). More explicitly, if we denote by $\lambda(\mu)$ the best 
nonnegative constant for which the inequality
$$\lambda(\mu) \int \Big| g(x) - \int\! g\, d\mu \Big|^2 \, d\mu(x) \le \int |\nabla g|^2\, d\mu$$
holds for every smooth enough $g\in L^2(\mu)$, then when $\mu$ is a log-concave measure on $\R^n$,  we have
\begin{equation}\label{equivcheegerpoincare}
c\,  h(\mu)^2 \le \lambda(\mu) \le C\, h(\mu)^2
\end{equation} 
for some universal (numerical) constants $c, C>0$, independent of $\mu$ and $n$; see~\cite{ledoux04, Emilman}. 

In the rest of the paper, we will adopt the lazy but convenient tradition from asymptotic functional analysis  to call "a numerical constant $c$" any positive constant larger than $2$ or smaller than $1/2$ ($c$ may even vary from line to line). So a numerical constant refers to a universal constant (in particular it does not depend on $n$, $V$, $\mu$, $\nu$, etc.) whose exact value is irrelevant but who could \emph{a priori} be computed explicitly. 

There is another natural cost function associated to any measure having a positive Cheeger constant, namely the cost $\min\big( h(\mu)^2 |y-x|^2 \, ,\,  h(\mu) |y-x|  \big)= \costmin (h(\mu)|y-x|)$, where
\begin{equation}\label{defcostmin}
\forall t\ge 0, \qquad \costmin(t) := \min(t^2, \, t).
\end{equation}
This cost -in this form or in some equivalent form- has been studied by several authors (see again~\cite{ledoux, GL} for details).  

Since equality holds in Proposition~\ref{thm1}  when $\nu$ is a translate of $\mu$ it is natural, if we want a remainder term, to minimize over translations, or equivalently, to impose some centering. 

The main result of this note is the following Theorem. 

\begin{theo}[General transport inequality with a remainder term]\label{thm2}
Let $V:\R^n \to \R$ be a locally Lipschitz function satisfying~\eqref{hyp} and let $\costV$ be the cost defined by~\eqref{defcost2}.  Introduce the cost
$$\ncost_V (x,y) = \cost_V (x,y) + c\,\costmin\big(h(\mu_V) |y-x| \big)$$
where $c>0$ is an appropriate numerical constant.

Then, for every probability measure $\nu$ on $\R^n$ such that $\int x \, d\nu = \int x \, d\mu_V$ we have 
\begin{equation}\label{ineq:thm2}
\mathcal W_{\ncost_V}(\mu_V, \nu)  \le H(\nu ||\mu_V) .
\end{equation}
As a consequence, we have the quantitative version of Proposition~\ref{thm1} when $\int x \, d\nu = \int x \, d\mu_V$:
\begin{equation}\label{ineq2:thm2}
H(\nu ||\mu_V)  \ge  \mathcal W_{\cost_V}(\mu_V, \nu)  +  c\,  \mathcal W_{\costmin(h(\mu_V) |y-x|)}(\mu_V, \nu)
\end{equation}
and in particular, we also have, 
\begin{equation}\label{ineq3:thm2}
H(\nu ||\mu_V)  \ge \mathcal W_{\cost_V}(\mu_V, \nu)  +  c \min\Big\{ h(\mu_V)^2 W_1^2, (\mu_V, \nu), h(\mu_V) W_1(\mu_V, \nu)\Big\}
\end{equation}
\end{theo}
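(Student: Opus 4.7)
The plan is to run the Brenier-map proof of Proposition~\ref{thm1} as an equality and extract from its defect a remainder term of the required form. Let $T = \nabla \phi$ with $\phi$ convex be the Brenier map pushing $\mu_V$ onto $\nu$, and set $\psi(x) := \phi(x) - |x|^2/2$, so that $T(x) - x = \nabla \psi(x)$ and $D^2 \phi = \id + D^2 \psi$. The centering hypothesis $\int x\, d\nu = \int x\, d\mu_V$ translates by change of variables to $\int \nabla \psi\, d\mu_V = 0$, which will be crucial. The Monge--Amp\`ere computation of Proposition~\ref{thm1}, carried out as an equality with integration by parts using $\int x \cdot \nabla V\, d\mu_V = n$ and $\int \Delta \phi\, d\mu_V = \int \nabla V \cdot \nabla \phi\, d\mu_V$, yields
\begin{equation*}
H(\nu || \mu_V) = \int \costV(x, T(x))\, d\mu_V(x) + \int R(x)\, d\mu_V(x),
\end{equation*}
where $R(x) := \sum_i [\lambda_i(x) - 1 - \log \lambda_i(x)]$ is nonnegative and $\lambda_i(x)$ are the eigenvalues of $D^2 \phi(x)$. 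The remainder $R$ measures the slack in the log-determinant inequality and is the source of the quantitative improvement.

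The heart of the proof is the bound $\int R\, d\mu_V \ge c \int \costmin\bigl(h(\mu_V)|\nabla \psi(x)|\bigr)\, d\mu_V(x)$ with a numerical constant $c > 0$. An elementary one-variable analysis of $E(\lambda) := \lambda - 1 - \log \lambda$ on $(0, \infty)$ yields the two-regime lower bound $E(\lambda) \ge c_0 \min\bigl((\lambda-1)^2,\, |\lambda-1|\bigr)$, so that with $\mu_i := \lambda_i - 1$ denoting the eigenvalues of $D^2\psi$, one has $R(x) \ge c_0 \sum_i \costmin(|\mu_i(x)|)$ pointwise. The main analytic task is then to convert this second-order control on $D^2 \psi$ into first-order control on $\nabla \psi$ with a dimension-free constant. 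This is where the centering enters: for every unit vector $\theta$, the scalar function $\theta \cdot \nabla \psi$ has mean zero under $\mu_V$, so applying the Poincar\'e inequality
\begin{equation*}
\lambda(\mu_V) \int (\theta \cdot \nabla \psi)^2\, d\mu_V \le \int |D^2 \psi\, \theta|^2\, d\mu_V
\end{equation*}
and averaging over $\theta \in S^{n-1}$ produces the dimension-free bound $\lambda(\mu_V) \int |\nabla \psi|^2\, d\mu_V \le \int \|D^2 \psi\|_{\mathrm F}^2\, d\mu_V$, and analogously the Cheeger inequality gives its $L^1$ counterpart $h(\mu_V) \int |\nabla\psi|\, d\mu_V \le C \int \|D^2\psi\|_{\mathrm F}\, d\mu_V$ (the dimensional factors on sphere averages cancel). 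Combining these with~\eqref{equivcheegerpoincare} and splitting the space into $\{h(\mu_V)|\nabla \psi| \le 1\}$, where one uses the quadratic lower bound on $R$ together with Poincar\'e, versus its complement, where one uses the linear lower bound on $R$ together with Cheeger, yields the desired inequality. Ensuring the region-splitting produces a dimension-free constant in the $L^1$ part (where the pointwise bound $\sum_i |\mu_i| \ge \|D^2\psi\|_{\mathrm F}$ is needed) is the subtlest step and the main obstacle.

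Once the inequality $\int R\, d\mu_V \ge c \int \costmin(h(\mu_V)|T(x) - x|)\, d\mu_V$ is in hand, \eqref{ineq:thm2} follows at once: the Brenier coupling $\pi := (\id, T)_\# \mu_V$ satisfies $\int \ncost_V\, d\pi \le H(\nu || \mu_V)$ and $\mathcal W_{\ncost_V}(\mu_V, \nu) \le \int \ncost_V\, d\pi$. For \eqref{ineq2:thm2} one writes $\int \ncost_V\, d\pi = \int \costV\, d\pi + c \int \costmin(h(\mu_V)|y-x|)\, d\pi$ and bounds each summand from below by the corresponding Wasserstein cost using the same coupling $\pi$. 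Finally, \eqref{ineq3:thm2} follows from \eqref{ineq2:thm2} via the Jensen-type inequality $\mathcal W_{\costmin(h(\mu_V)|y-x|)}(\mu_V, \nu) \ge \tfrac{1}{2} \costmin(h(\mu_V) W_1(\mu_V, \nu))$, itself a direct consequence of the convex lower bound $\costmin(t) \ge t^2/(1+t)$ applied to the optimal $W_1$-coupling.
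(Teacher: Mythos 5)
Your overall architecture coincides with the paper's: the Brenier map, the Monge--Amp\`ere identity $H(\nu||\mu_V)=\int\costV(x,T(x))\,d\mu_V+\int\tr(\F(D^2\theta))\,d\mu_V$ with $\F(t)=t-\log(1+t)$, and the reduction of everything to the single estimate $\int\tr(\F(D^2\theta))\,d\mu_V\ge c\int\F\big(h(\mu_V)|\nabla\theta|\big)\,d\mu_V$ under the centering $\int\nabla\theta\,d\mu_V=0$ (the paper's Lemma~\ref{reste}); your deductions of \eqref{ineq2:thm2} and \eqref{ineq3:thm2} from \eqref{ineq:thm2} are also fine. But the proof of that key estimate is precisely the part you leave open, and the route you sketch does not close. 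The splitting into $\{h(\mu_V)|\nabla\theta|\le 1\}$ and its complement cannot be combined with the Poincar\'e and Cheeger inequalities, because those are \emph{global} inequalities: one cannot apply Poincar\'e to $u\cdot\nabla\theta$ restricted to a sublevel set of $|\nabla\theta|$ while retaining both the mean-zero hypothesis and the inequality. Worse, the $L^2$ branch would require controlling $\int\|D^2\theta\|_{\mathrm F}^2\,d\mu_V$ by the remainder $\int R\,d\mu_V$, which is impossible: $R$ grows only linearly in the large eigenvalues of $D^2\theta$, so the Frobenius square integral may be infinite while $\int R\,d\mu_V$ is finite. Symmetrically, the $L^1$ branch would need $\int\sum_i|\mu_i|\,d\mu_V\lesssim\int R\,d\mu_V$, which fails where the eigenvalues are small. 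The two regimes are coupled pointwise in $x$, and even eigenvalue by eigenvalue, so no global region splitting disentangles them; this is exactly the obstacle you flag as ``the subtlest step'' without resolving it.

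The paper's resolution is to replace the pair (Poincar\'e, Cheeger) by a single \emph{nonlinear} Cheeger-type inequality of Bobkov and Houdr\'e (Fact~\ref{BH}): $\int\F\big(|f-\smallint f\,d\mu|\big)\,d\mu\le c\int\F\big(|\nabla f|/h(\mu)\big)\,d\mu$, which handles the quadratic and linear regimes simultaneously because $\F$ itself interpolates between them. It is applied to the mean-zero scalar functions $f_u=h(\mu_V)\sqrt n\,\nabla\theta\cdot u$, $u\in S^{n-1}$, whose gradients are $h(\mu_V)\sqrt n\,D^2\theta\,u$; averaging over $u$ and using the convexity of $\F$ together with $c|X|\le\sqrt n\sint|X\cdot u|\ds(u)$ recovers $\F(h(\mu_V)|\nabla\theta|)$ on the left with dimension-free constants. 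The other missing ingredient is the matrix inequality $\tr(\F(A))\ge\frac18\sint\F\big(\sqrt n|Au|\big)\ds(u)$ (Fact~\ref{trace}), which converts your correct pointwise bound $R\ge c_0\sum_i\F(|\mu_i|)$ into the spherical averages needed to feed into Fact~\ref{BH}. Without these two facts, or substitutes for them, the central Lemma --- and hence \eqref{ineq:thm2} --- is not established.
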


Note that unlike the quantities $H$ and $\mathcal W_{\costV}$, the cost $\costmin_{\costmin(h(\mu_V) |y-x|)}$ is very much dependent on the scalar product, which should therefore be chosen with care. 

Let us explain how the consequences of~\eqref{ineq:thm2} stated in the Theorem are obtained.  The first one~\eqref{ineq2:thm2} follows from a general and straightforward principle: given two costs $c_1, c_2$ we always have $\mathcal W_{c_1+ c_2}(\cdot, \cdot)  \ge \mathcal W_{c_1}(\cdot, \cdot)  + \mathcal W_{c_2}(\cdot, \cdot) $. The "in particular", may seem more dubious. The reason that~\eqref{ineq3:thm2} follows indeed from~\eqref{ineq2:thm2} is that, up to numerical constants (see below) we can replace the function $\costmin(s)=\min(s^2,s)$ by a convex increasing function $\mathcal F(s)$, and then we can invoque Jensen's inequality to ensure that $\mathcal W_{\mathcal F(|y-x|)}(\nu, \mu) \ge \mathcal F\big( \mathcal W_{|y-x|} (\mu, \nu) \big)$.  

Note however that the form~\eqref{ineq3:thm2} is strictly weaker than the forms~\eqref{ineq:thm2} and~\eqref{ineq2:thm2}. In particular, we should note that  the cost $\costmin(h(\mu_V) |y-x|)$ behaves like $h(\mu_V)^2 |y-x|^2$ when $x$ and $y$ are close to each other, and this behavior is well adapted to linearization procedures.

Let us describe some consequences of Theorem~\ref{thm2} in the case where $V$ is convex.  Applied to Gaussian type measures, when $\costV(x,y) \ge \lambda|x-y|^2/2$, it amounts to a quantitative version of the transport inequality~\eqref{Ttransport}. 

\begin{prop}[Gaussian type transport with a remainder]\label{prop:qT}
Let $V:\R^n \to \R$ be a $C^2$ convex function with $D^2 V \ge \lambda\id $ on $\R^n$ for some $\lambda >0$ (we have mainly in mind the Gaussian measure, for which $\lambda=1$). 
Then, for every probability measure $\nu$ on $\R^n$ such that $\int x \, d\nu = \int x \, d\mu_V$ we have 
\begin{eqnarray}
H(\nu ||\mu_V)  - \frac{\lambda}2 W_{2}^2(\mu_V, \nu) &\ge&  c\,   \mathcal W_{\costmin(h(\mu_V) |y-x|)}(\mu_V, \nu) \label{quantgaussiantransport1}\\
& \ge &\tilde c \min\Big\{ h(\mu_V)^2 W_1^2, (\mu_V, \nu), h(\mu_V) W_1(\mu_V, \nu)\Big\}  \label{quantgaussiantransport2}
\end{eqnarray}
for some numerical constants $c,\tilde c >0$. One can also replace $h(\mu_V)^2$ by $\lambda$ since  $h(\mu_V)^2 \ge c'\, \lambda$ for some numerical constant $c'>0$. 
\end{prop}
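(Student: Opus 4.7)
The plan is to deduce this Proposition directly from Theorem~\ref{thm2}. The starting point is the inequality~\eqref{ineq2:thm2}, which is available under the hypothesis $\int x\, d\nu = \int x\, d\mu_V$:
$$H(\nu \| \mu_V) \ge \mathcal{W}_{\cost_V}(\mu_V, \nu) + c\, \mathcal{W}_{\costmin(h(\mu_V)|y-x|)}(\mu_V, \nu).$$
To turn the first term on the right-hand side into the Gaussian term $\tfrac{\lambda}{2}W_2^2(\mu_V,\nu)$, I would use the pointwise lower bound on the cost obtained from second-order Taylor expansion under $D^2V \ge \lambda \id$:
$$\cost_V(x,y) = V(y) - V(x) - \nabla V(x)\cdot(y-x) \ge \frac{\lambda}{2}|y-x|^2.$$
Since a pointwise inequality between costs induces an inequality between their Kantorovich transportation costs, we get $\mathcal{W}_{\cost_V}(\mu_V,\nu) \ge \frac{\lambda}{2}W_2^2(\mu_V,\nu)$, and combined with the display above this yields~\eqref{quantgaussiantransport1}.

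For the weaker form~\eqref{quantgaussiantransport2}, I would follow the principle sketched by the author after the statement of Theorem~\ref{thm2}: replace $\costmin(s) = \min(s^2,s)$ by a convex, nondecreasing function $\mathcal{F}$ satisfying $c_1\mathcal{F}(s) \le \costmin(s) \le c_2\mathcal{F}(s)$ for numerical constants (e.g.\ one may take $\mathcal{F}(s) = s^2/(1+s)$, which equals $s^2$ near $0$ and grows linearly at infinity). Then for any coupling $\pi$ of $\mu_V$ and $\nu$, Jensen's inequality applied to the convex $\mathcal{F}$ and the monotonicity of $\mathcal{F}$ give
$$\int \mathcal{F}(h(\mu_V)|y-x|)\, d\pi \ge \mathcal{F}\Big(h(\mu_V) \int |y-x|\, d\pi\Big) \ge \mathcal{F}\big(h(\mu_V) W_1(\mu_V,\nu)\big).$$
Taking the infimum over $\pi$ and translating back to $\costmin$ up to numerical constants produces the desired min $\min\{h(\mu_V)^2 W_1^2(\mu_V,\nu),\, h(\mu_V) W_1(\mu_V,\nu)\}$.

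Finally, for the comparison $h(\mu_V)^2 \ge c'\lambda$, I would invoke the Brascamp--Lieb inequality~\eqref{hbl}, which under $D^2V \ge \lambda\id$ yields a Poincaré inequality for $\mu_V$ with constant at least $\lambda$, i.e.\ $\lambda(\mu_V) \ge \lambda$. Combined with the Cheeger/Poincaré equivalence~\eqref{equivcheegerpoincare} valid for log-concave measures, $\lambda(\mu_V) \le C\, h(\mu_V)^2$, this gives $h(\mu_V)^2 \ge \lambda/C$. None of the steps is really an obstacle here; the mildly delicate point is only the convexification of $\costmin$ and the bookkeeping of numerical constants when passing from $\mathcal{F}$ back to $\costmin$ in the final bound.
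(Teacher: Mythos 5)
Your proposal is correct and follows exactly the route the paper intends: Proposition~\ref{prop:qT} is stated as a direct consequence of Theorem~\ref{thm2}, obtained by combining~\eqref{ineq2:thm2} with the pointwise bound $\cost_V(x,y)\ge\frac{\lambda}{2}|y-x|^2$ from Taylor expansion, the convexification-plus-Jensen argument for $\costmin$ (the paper uses $\mathcal F(s)=s-\log(1+s)$ where you use $s^2/(1+s)$, both comparable to $\min(s^2,s)$ up to numerical constants), and the chain $h(\mu_V)^2\ge\lambda(\mu_V)/C\ge\lambda/C$ via Brascamp--Lieb and~\eqref{equivcheegerpoincare}. No gaps.
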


Next, linearization of the inequality~\eqref{ineq:thm2} in Theorem~\ref{thm2} leads  to a reinforced Brascamp-Lieb inequality in the case of centered functions.

\begin{prop}\label{propRBL}
Let $V:\R^n \to \R$ be a $C^2$ convex function with $\int e^{-V}<+\infty$.  For every locally Lipschitz function $g\in L^2(\mu_V)$ with
$$\int x \big(g(x) - \int g\, d\mu_V\big)  \,  d\mu_V(x) = 0$$
we have
$$\Var_{\mu_V} (g) \le  \int \big[ D^2V+ c\, h(\mu_V)^2 \id\big]^{-1}\nabla g \cdot \nabla g \, d\mu_V,$$
for some numerical constant $c>0$. We can replace $h(\mu_V)^2$ by $\lambda(\mu_V)$, in view of~\eqref{equivcheegerpoincare}.
\end{prop}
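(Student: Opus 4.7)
The plan is to linearize the quantitative transport inequality~\eqref{ineq:thm2} by perturbing $\mu_V$ along the direction of $g$. Set $h := g - \int g\, d\mu_V$, so that $\int h\, d\mu_V = 0$ and, by hypothesis, $\int x\,h\, d\mu_V = 0$; note that $\Var_{\mu_V}(g) = \int h^2\, d\mu_V$ and $\nabla h = \nabla g$. A standard truncation and density argument reduces matters to the case when $h$ is bounded and Lipschitz. For small $\eps>0$ define the probability measure $d\nu_\eps := (1+\eps h)\, d\mu_V$; its barycenter coincides with that of $\mu_V$, so Theorem~\ref{thm2} applies and gives
\begin{equation*}
\mathcal W_{\ncost_V}(\mu_V, \nu_\eps) \le H(\nu_\eps \,||\,\mu_V).
\end{equation*}
The idea is to expand both sides to order $\eps^2$ and send $\eps \to 0^+$.

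The entropy side is immediate from $(1+u)\log(1+u) = u + u^2/2 + O(u^3)$ and $\int h\, d\mu_V = 0$:
\begin{equation*}
H(\nu_\eps \,||\,\mu_V) = \tfrac{\eps^2}{2}\,\Var_{\mu_V}(g) + o(\eps^2).
\end{equation*}
For the transport side I would use Kantorovich duality with test function $\eps h$:
\begin{equation*}
\mathcal W_{\ncost_V}(\mu_V, \nu_\eps) \ge -\eps\int h\, d\mu_V + \int Q_{\ncost_V}(\eps h)\, d\nu_\eps.
\end{equation*}
Parametrizing the near-optimizer as $x = y + \eps u$, a direct Taylor expansion (using $\costV(y+\eps u, y) = \tfrac{\eps^2}{2}D^2V(y)u\cdot u + o(\eps^2)$ and $\costmin(t) = t^2$ for small $t$) yields
\begin{equation*}
\ncost_V(y+\eps u, y) = \tfrac{\eps^2}{2}\bigl(D^2V(y) + 2c_0\,h(\mu_V)^2\,\id\bigr)u\cdot u + o(\eps^2),
\end{equation*}
where $c_0$ is the numerical constant of Theorem~\ref{thm2}. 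Minimizing the quadratic form $\eps h(y) + \eps^2\nabla h(y)\cdot u + \tfrac{\eps^2}{2}M(y)u\cdot u$ in $u$, with $M(y):= D^2V(y) + 2c_0\,h(\mu_V)^2\,\id$, produces
\begin{equation*}
Q_{\ncost_V}(\eps h)(y) = \eps h(y) - \tfrac{\eps^2}{2}\,M(y)^{-1}\nabla h(y)\cdot\nabla h(y) + o(\eps^2).
\end{equation*}
Integrating against $d\nu_\eps = (1+\eps h)\, d\mu_V$ and using $\int h\,d\mu_V = 0$ gives
\begin{equation*}
\mathcal W_{\ncost_V}(\mu_V, \nu_\eps) \ge \eps^2\,\Var_{\mu_V}(g) - \tfrac{\eps^2}{2}\int M^{-1}\nabla g\cdot \nabla g\, d\mu_V + o(\eps^2).
\end{equation*}

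Combining the two expansions, dividing by $\eps^2/2$, and sending $\eps\to 0$ yields the reinforced Brascamp-Lieb inequality with $c = 2c_0$; the variant with $\lambda(\mu_V)$ in place of $h(\mu_V)^2$ is immediate from the equivalence~\eqref{equivcheegerpoincare}. The main technical point I expect to handle carefully is showing that the Taylor expansion of $Q_{\ncost_V}(\eps h)(y)$ holds \emph{uniformly} enough in $y$ to survive integration. The coercivity of $\ncost_V$ in $|y-x|$ furnished by the $\costmin(h(\mu_V)|y-x|)$ summand (recall $h(\mu_V)>0$ because $V$ is convex) confines the near-optimizers $x$ in the definition of $Q_{\ncost_V}(\eps h)(y)$ to a ball of radius $O(\eps)$ around $y$; on that scale, the $C^2$ regularity of $V$ together with the boundedness and Lipschitz character of $h$ (secured by the truncation step) render the remainder uniformly $o(\eps^2)$, which justifies the passage to the limit.
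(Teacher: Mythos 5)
Your proposal is correct and takes essentially the same route as the paper: both linearize the quantitative transport inequality of Theorem~\ref{thm2} at $\nu_\eps=(1+\eps h)\,d\mu_V$, bounding the transport cost from below via Kantorovich duality and a second-order expansion of the infimal convolution $Q_{\ncost_V}(\eps h)$, and matching this against the second-order expansion of the entropy. The only cosmetic differences are that the paper secures the $O(\eps)$ confinement of the near-minimizer by first adding $\delta_0|x|^2$ to $V$ so that the cost dominates $\delta_0|x-y|^2$ globally (whereas you invoke the coercivity of the $\costmin$ summand, which also works), and that the paper's Lemma~\ref{approx} optimizes over test functions $\lambda f$ while you take the optimal choice $f=h$ from the start.
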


One can derive a similar result using H\"ormander's $L^2$-method,  but with a different centering condition of the form $\int \nabla g \, d\mu_V=0$ (see~\cite{BC}). 

We should add, as apparent from the proof, that the convexity of $V$ is not really needed in Propostion~\ref{propRBL}. The correct assumption is that $D^2V+ c\, h(\mu_V)^2(\mu_V)\id$ is nonnegative.  In particular, the result applies to perturbed log-concave measures, provided $h(\mu_V)>0$. 

Equality cases in the Brascamp-Lieb inequality~\eqref{hbl}  are given, exactly, by the functions  $g$ of the form
\begin{equation}\label{eqBL}
g(x) = \nabla V(x) \cdot v_0 + c_0
\end{equation}
with  $v_0\in \R^n$ and $c_0 \in \R$.  In order to have a nice quantitative version, one would like to get rid of the centering assumption and to measure, in some form, a "distance" 
$$\inf_{v_0,c_0} d(g, \nabla V \cdot v_0 +c_0) $$ 
to the set of extremizers~\eqref{eqBL}. Here is an attempt.

\begin{prop}[Brascamp-Lieb inequality with a remainder term]\label{propQBL}
Let $V:\R^n \to \R$ be a $C^2$ convex function with $\int e^{-V}<+\infty$.
Then for every    locally Lipschitz function  $g\in L^2(\mu_V)$, if we denote
$$
g_0(x):= g(x) - \nabla V(x) \cdot v_0 - c_0, \quad c_0:= \int\! g\, d\mu_V\textrm{ and }  v_0 :=  \int y \, (g(y)-c_0)\, d\mu_V(y),$$
we have 
\begin{multline*}
\int (D^2V(x))^{-1} \nabla g(x) \cdot \nabla g(x) \, d\mu_V(x) - \Var_{\mu_V} (g) \\
\ge c\lambda(\mu_V)\int (D^2V)^{-1} \big( D^2V+ c \lambda(\mu_V)\id)^{-1}\nabla g_0 \cdot \nabla g_0 \, d\mu_V
\end{multline*}
where $c>0$ is a numerical constant. As a consequence, if we denote  by $\lambda_{\rm max}(x)$ the largest eigenvalue of the nonnegative operator $D^2V(x)$, we have
$$\int (D^2V(x))^{-1} \nabla g(x) \cdot \nabla g(x) \, d\mu_V(x) - \Var_{\mu_V} (g) \ge  \frac{c \lambda(\mu_V)}{\sup _x   \lambda_{\rm max}(x) + c \lambda(\mu_V)}  \int |g_0|^2 \, d\mu_V $$
and 
$$\int (D^2V(x))^{-1} \nabla g(x) \cdot \nabla g(x) \, d\mu_V(x) - \Var_{\mu_V} (g) \ge  \frac{\tilde c\,  \lambda(\mu_V)^2}{\int\lambda_{\rm max} (  \lambda_{\rm max} + c \lambda(\mu_V)) \, d\mu_V} \Big(\int |g_0|\, d\mu_V\Big)^2$$
where $\tilde c>0$ is a numerical constant. 
\end{prop}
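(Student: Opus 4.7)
The plan is to reduce the problem to the centered function $g_0$ and then apply Proposition~\ref{propRBL} to $g_0$. The first key step is the exact cancellation
\begin{equation*}
\int (D^2V)^{-1}\nabla g \cdot \nabla g\, d\mu_V - \Var_{\mu_V}(g) = \int (D^2V)^{-1}\nabla g_0 \cdot \nabla g_0\, d\mu_V - \Var_{\mu_V}(g_0).
\end{equation*}
To establish this I would expand $\nabla g = \nabla g_0 + D^2V\, v_0$ to obtain $(D^2V)^{-1}\nabla g \cdot \nabla g = (D^2V)^{-1}\nabla g_0 \cdot \nabla g_0 + 2 v_0\cdot \nabla g_0 + v_0 \cdot D^2V\, v_0$, and then use two integration-by-parts identities for $\mu_V$: $\int \nabla h\, d\mu_V = \int h \nabla V\, d\mu_V$ (yielding in particular $\int \nabla V\, d\mu_V = 0$) and $\int D^2V\, d\mu_V = \int \nabla V\otimes \nabla V\, d\mu_V$. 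Since $\int g_0\, d\mu_V = 0$ (this is exactly the definition of $c_0$) and $\int \nabla V \cdot v_0\, d\mu_V = 0$, expanding $\Var_{\mu_V}(g) = \int (g_0 + \nabla V \cdot v_0)^2 d\mu_V$ produces matching cross terms that cancel against those on the left.

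Next I would check that $g_0$ satisfies the centering hypothesis of Proposition~\ref{propRBL}: integration by parts gives $\int x_i\, \partial_j V\, d\mu_V = \delta_{ij}$, hence $\int x(\nabla V \cdot v_0)\, d\mu_V = v_0$, and this combined with the definition of $v_0$ yields $\int x\, g_0\, d\mu_V = 0$. Applying Proposition~\ref{propRBL} to $g_0$ gives $\Var_{\mu_V}(g_0) \le \int (D^2V + c\, h(\mu_V)^2 \id)^{-1}\nabla g_0 \cdot \nabla g_0\, d\mu_V$. Combining this with the reduction above and the operator identity
\begin{equation*}
(D^2V)^{-1} - (D^2V + \alpha\id)^{-1} = \alpha\, (D^2V)^{-1}(D^2V + \alpha\id)^{-1},
\end{equation*}
with $\alpha = c\, h(\mu_V)^2$ (and using $h(\mu_V)^2 \asymp \lambda(\mu_V)$ from~\eqref{equivcheegerpoincare}) yields the first displayed inequality of the proposition.

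For the two consequences, I would argue spectrally. Pointwise, $\lambda_{\max}(x) \le M := \sup_y \lambda_{\max}(y)$ gives $(D^2V(x)+c\lambda\id)^{-1} \ge (M+c\lambda)^{-1}\id$, hence $(D^2V)^{-1}(D^2V+c\lambda\id)^{-1} \ge (M+c\lambda)^{-1}(D^2V)^{-1}$; combined with the standard Brascamp-Lieb inequality $\int (D^2V)^{-1}\nabla g_0 \cdot \nabla g_0\, d\mu_V \ge \Var_{\mu_V}(g_0) = \int |g_0|^2 d\mu_V$ this gives the $\int |g_0|^2\, d\mu_V$ consequence. For the $L^1$ version, Cauchy-Schwarz with the weight $\lambda_{\max}(\lambda_{\max}+c\lambda)$, the pointwise bound $\frac{|\nabla g_0|^2}{\lambda_{\max}(\lambda_{\max}+c\lambda)} \le (D^2V)^{-1}(D^2V+c\lambda\id)^{-1}\nabla g_0 \cdot \nabla g_0$ (the smallest eigenvalue of the latter operator is $1/[\lambda_{\max}(\lambda_{\max}+c\lambda)]$), and Cheeger's inequality $\int |\nabla g_0|\, d\mu_V \ge h(\mu_V)\int |g_0|\, d\mu_V$ (legitimate since $\int g_0\, d\mu_V = 0$) give the desired bound after one more application of $h(\mu_V)^2 \asymp \lambda(\mu_V)$.

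The main obstacle is the cancellation identity in the first step: the specific definitions of $c_0$ and $v_0$ are exactly what is required both to make $g_0$ satisfy the centering condition of Proposition~\ref{propRBL} and to produce matching cross terms in the two expressions, so that all residuals cancel simultaneously. Any other affine correction would destroy one of these two properties and leave terms that cannot be controlled by $\nabla g_0$ alone.
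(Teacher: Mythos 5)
Your proposal is correct and follows essentially the same route as the paper: the exact cancellation reducing the deficit for $g$ to the deficit for $g_0$ (which the paper leaves as "readily checked by elementary calculus" and you carry out correctly via the two integration-by-parts identities), verification of the centering hypotheses for $g_0$, application of Proposition~\ref{propRBL} together with the resolvent identity, and then the uniform spectral bound plus Brascamp--Lieb for the $L^2$ consequence and Cauchy--Schwarz plus Cheeger for the $L^1$ consequence. No substantive differences.
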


Let us recall that $\lambda(\mu_V)$ can be estimated by   
$\lambda(\mu_V) \ge \frac{c}{\int \lambda_{\rm min}^{-1}\, d\mu_V}$,
where  $\lambda_{\rm min}(x)$ denotes the lowest eigenvalue of the nonnegative operator $D^2V(x)$ and $c>0$ is a numerical constant (see~\cite{veysseire,  MK}). 
Therefore, the constant in the previous Proposition (which is an increasing function of $\lambda(\mu_V))$ can be lower bounded by some integrals of $\lambda_{\rm min}$ and $\lambda_{\rm max}$ with respect to $\mu_V$. For instance, using the previous bound and the fact $(\int \lambda_{\rm min}^{-1}\, d\mu_V)^{-1} \le \int \lambda_{\rm max}\, d\mu_V$ we find 
 $$ \frac{\tilde c\,  \lambda(\mu_V)^2}{\int\lambda_{\rm max} (  \lambda_{\rm max} + c \lambda(\mu_V)) \, d\mu_V}  \ge \frac{c}{\big(\int \lambda_{\rm min}^{-1}\, d\mu_V\big)^2\, \int \lambda_{\rm max}^2\, d\mu_V }$$
for some numerical constant $c>0$. This might provide computable a constant beyond the easy case where $\lambda \le D^2 V \le R$ on $\R^n$. 

\medskip

We conclude this introduction with some bibliographical comments. Part of the present note is rather elementary, and many arguments are known to specialists in mass transport, some having appeared implicitly or explicitly in recent or older works. 
For instance, we already said that Proposition~\ref{thm1} was folklore in the theory, and while writing these notes we heard about the work of Bolley, Gentil and Guillin~\cite{BGG} which contains an analogue, in a less straightforward form,  of the inequality of Proposition~\ref{thm1}  together with its connection to the Brascamp-Lieb inequality. If we go back in time, the idea of using the remainder term in the transportation proof of~\cite{cordero}  appears, in the case of dimension one, in the paper by Barthe and Kolesnikov~\cite{BK}. Similar arguments in higher dimensions for unconditional measures were recently used in~\cite{klartag14} and in a form very close to the one used here in~\cite{CG}. Mass transport arguments combined with Poincar\'e inequalities (of different nature than the one we use) were put forward to exhibit  remainder terms in isoperimetric type inequalities in the far-reaching work of Figalli, Maggi and Pratelli, in particular in~\cite{FMP} for the case of log-concave measures (or rather convex sets). Our treatment is in part very close to the recent work of Fathi, Indrei and Ledoux~\cite{FIL} were the mass transport remainder term is combined with a Poincar\'e inequality in order to get a bound on the deficit for Talagrand's inequality~\eqref{Ttransport} in the case of the Gaussian measure (they have also similar, but deeper, arguments for the log-Sobolev inequality, a case that was also considered in~\cite{BGRS}). 

The quantitative transport inequality obtained by Fathi, Indrei and Ledoux~\cite{FIL} for the standard Gaussian measure $\mu=\gamma$ on $\R^n$  (a case where $\lambda(\mu)=1$)  is as follows: for any probability measure $\nu$ with $\int x \, d\nu(x) = \int x\, d\gamma(x) = 0$, 
$$H(\nu ||\gamma)  - \frac{1}2 W_{2}^2(\gamma, \nu) \ge c \min\Big( \frac{W_{1,1}^2(\gamma, \nu)}n , \frac{W_{1,1}(\gamma, \nu)}{\sqrt n} \Big)$$
where $W_{1,1} :=\mathcal W_{\|x-y\|_1} $ with $\|x-y\|_1:=\sum_{i=1}^n |x_i-y_i|$. 
If we compare with inequality~\eqref{quantgaussiantransport2} in Proposition~\ref{prop:qT} above applied to the Gaussian measure $\mu=\gamma$, we see that our result is formally stronger, since
$$ W_1(\mu, \nu) \ge \frac{W_{1,1}(\mu, \nu)}{\sqrt n}.$$
Actually, our bound is significantly better in many cases,  but  both bounds are "equally  bad" when $\nu$ is a product of centered measures being at a "large" distance from the one-dimensional Gaussian, since in this case one expects a remainder of order $n$ and both results give something of order $\sqrt n$ (on the other hand, it is not clear to us that this situation is the most relevant one). 

Regarding quantitive versions of the variance Brascamp-Lieb inequality,  Harg\'e~\cite{harge} (by an $L^2$-method) and recently~Bolley, Gentil and Guillin~\cite{BGG} (by linearization of a transport inequality) obtained a remainder term which is,  up to  constants depending on $\mu_V$,  of the form 
$$\Big( \int g\, V\,  d\mu_V  - \int g \, d\mu_V \, \int V \, d\mu_V \Big)^2=:R_V(g)$$
Note that, unlike the remainder term in Proposition~\ref{propQBL}, this term $R_V(g)$ does not vanish only for extremizers. For instance $R_V(g)$  is zero if $V$ is even and $g$ odd. Actually, the space where $R_V$ vanishes is of co-dimension one in $L^2(\mu_V)$ whereas extremizers~\eqref{eqBL} form a $(n+1)$ dimensional subspace. Of course, it could be  that such type of remainder is nonetheless sometimes better and more useful than the one we obtained. 
Bolley, Gentil and Guillin also derive,  in the same work~\cite{BGG} but by a different method (namely by linearization of a functional Brunn-Minkowski inequality), a second quantitive form of the variance Brascamp-Lieb inequality with a remainder term that vanishes exactly for the extremizers~\eqref{eqBL}, as expected. This remainder however is not an $L^1$ or $L^2$ distance to the space of extremizers, and so the comparison with the result of our  Proposition~\ref{propQBL} is not  clear to us. 

\medskip

The plan of the paper is as follows. In the next section we prove Proposition~\ref{thm1}  and Theorem~\ref{thm2}. For this we recall some tools from the Brenier-McCann monotone mass transport theory, and prove a general lower bound for the remainder term (Lemma~\ref{reste}) that might be of independent interest. Then, we prove Proposition~\ref{propRBL} and Proposition~\ref{propQBL}.

We would like to thank Bernard Maurey for useful observations on  our manuscript, and the anonymous referee for several insightful questions and observations that led to improved statements. We would like also to express our deep gratitude to Emanuel Milman for his most sharp reading of the first version of our preprint. He pointed out to us a fatal mistake in the use we made of his results; this led us to rewrite the main statements and their proofs.

\section{Mass transport, minoration of the remainder and proofs of Proposition~\ref{thm1} and Theorem~\ref{thm2}}

The proof of theorems~\ref{thm1} and~\ref{thm2} use monotone transportation of measure in the spirit of~\cite{cordero}. 

Given two probability measures $\mu$ and $\nu$ on $\R^n$ with densities $F$ and $G$, respectively, we know from Brenier~\cite{brenier} and McCann~\cite{mccann95} that there exists a convex function $\psi$ such that the map $\nabla\psi$ pushes forward $\mu$ onto $\nu$.
By the simple but useful weak-regularity theory of McCann \cite{mccann} we have,
for $\mu$-almost any $x$,
\begin{equation}
F(x)=  G(\nabla(\psi(x))  \det D^2 \psi(x). \label{eq_ma} 
\end{equation}
Here $D^2 \psi (x) $ stands for the Hessian
of the convex function $\psi$ in the sense of Aleksandrov, that exists almost everywhere. There are several ways to use this equation to prove our inequalities. One can use the McCann weak theory of change of variables~\cite{mccann}, as in~\cite{cordero}. The advantage is that it relies on simple arguments in convexity and  Lebesgue measure theory. Alternatively, one can use results on the regularity of Monge-Amp\`ere equation, in the spirit of those obtained by Caffarelli. This relies on more difficult and deeper arguments. However, partial regularity results for solutions of Monge-Amp\`ere have been simplified and extended recently, and we shall favor this point of view.  

Let us assume that $\mu$ and $\nu$ are supported on the whole $\R^n$, and that the densities are continuous and strictly positive (so locally bounded above and  away from zero), 
Since the support of the target measure is convex (here $\R^n$), one can prove that the convex function $\psi$  solves the Monge-Amp\`ere equation~\eqref{eq_ma} also in the sense of Aleksandrov (see the argument given in the proof of~\cite[Theorem~3.3]{FdF}, and by the assumption above on the densities, the local regularity of~\cite{mooney}, say, applies. In particular, $\psi$ is $W^{2,1}_{\text{loc}}(\R^n)$.

To prove the transport inequality of Proposition~\ref{thm1}  for $d\mu_V=\frac{e^{-V(x)}}{\int e^{-V}}dx$, we assume that $d\nu= f(x)\, d\mu_V(x)$. It is sufficient to prove the inequalities in Proposition~\ref{thm1} and Theorem~\ref{thm2} in the case where $f$ is continuous and strictly positive on $\R^n$, so that the previous assumptions are satisfied. We can also assume that $\nu$ has second moment. 

We introduce the Brenier map $T=\nabla \psi$ between $\mu_V$ and $\nu$.  We have that  $\psi\in W^{2,1}_{\text{loc}}(\R^n)$ and that almost everywhere
$$e^{-V(x)} = f(T(x))e^{-V(T(x))} \det D^2\psi(x).$$
It is convenient to introduce the displacement $\nabla\theta(x) = T(x)-x= \nabla\psi(x) -x$ (i.e. $\theta(x) := \psi(x) -|x|^2/2$).  If we take the log  in the previous equation and introduce $\costV(x,T(x))= V(T(x)) - V(x) - \nabla V(x) \cdot \nabla \theta(x)$, we find
\begin{multline*}
\log(f(T(x)) )-c(x,T(x))= \nabla V(x) \cdot \nabla \theta(x) - \log\det D^2\psi(x) \\
= \nabla V \cdot \nabla\theta - \Delta \theta + \Delta \theta - \log\det (\id + D^2\theta).
\end{multline*}
We integrate with respect to $\mu_V$. Noticing that $\int \log(f\circ T) d\mu_V=\int f \log(f) d\mu_V$,  we have
$$H(\nu||\mu_V) -  \int \costV(x,T(x))\, d\mu_V=  \int\big[\nabla V \cdot \nabla\theta - \Delta\theta \big]\, d\mu_V + \int \big[ \Delta \theta - \log\det(\id + D^2\theta)\big]\, d\mu_V.$$
The first term in the right-hand side vanishes after integration by parts, thanks to the integrability assumptions we have made. Let us justify this. 
\begin{fact}
$\displaystyle \int \Delta \theta\,  e^{-V} = \int \nabla \theta \cdot \nabla V \, e^{-V}$.
\end{fact}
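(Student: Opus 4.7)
The plan is a standard cutoff-and-limit argument. Fix a smooth cutoff $\chi_R\in C_c^\infty(\R^n)$ with $\chi_R=1$ on $B_R$, supported in $B_{2R}$, and $|\nabla\chi_R|\le C/R$. Since the excerpt establishes $\psi\in W^{2,1}_{\mathrm{loc}}(\R^n)$ (hence $\theta=\psi-|x|^2/2\in W^{2,1}_{\mathrm{loc}}$), the Laplacian $\Delta\theta$ is a genuine locally integrable function, and integration by parts against the compactly supported test function $\chi_R\,e^{-V}$ gives
\begin{equation*}
\int \Delta\theta\,\chi_R\,e^{-V}\,dx \;=\; \int (\nabla\theta\cdot\nabla V)\,\chi_R\,e^{-V}\,dx \;-\; \int (\nabla\theta\cdot\nabla\chi_R)\,e^{-V}\,dx.
\end{equation*}
I would then pass to the limit $R\to\infty$ on both sides.

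The integrability of $\nabla\theta\cdot\nabla V$ against $e^{-V}$ is the first check. Since $T=\nabla\psi$ pushes $\mu_V$ onto $\nu$, and we have reduced to $\nu$ with finite second moment, the formula $\nabla\theta(x)=T(x)-x$ together with~\eqref{hyp} yields $\int |\nabla\theta|^2\,e^{-V}\,dx<+\infty$; combined with $\int|\nabla V|^2\,e^{-V}\,dx<+\infty$ from~\eqref{hyp}, Cauchy--Schwarz gives $|\nabla\theta\cdot\nabla V|\,e^{-V}\in L^1(\R^n)$ and $|\nabla\theta|\,e^{-V}\in L^1(\R^n)$. Consequently the cutoff error is bounded by $\tfrac{C}{R}\int|\nabla\theta|\,e^{-V}\,dx\to 0$, and dominated convergence handles the middle term, sending it to $\int (\nabla\theta\cdot\nabla V)\,e^{-V}\,dx$.

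For the left-hand side, the key remark is that the convexity of $\psi$ gives $D^2\psi\ge 0$ a.e., so $\Delta\theta+n=\tr(D^2\psi)\ge 0$ almost everywhere. Monotone convergence applied to the nonnegative integrand $(\Delta\theta+n)\,e^{-V}\chi_R$ then yields $\int \Delta\theta\,\chi_R\,e^{-V}\,dx + n\int \chi_R\,e^{-V}\,dx\to\int(\Delta\theta+n)\,e^{-V}\,dx$; the convergence of the right-hand side forces this limit to be finite, and the claimed identity follows after cancelling the $n\int e^{-V}$ contributions. The main obstacle, in my view, is ensuring that $\Delta\theta$ is an honest function rather than merely a measure, so that the left-hand integral makes sense and monotone convergence is legitimate; this is exactly what the partial regularity result~\cite{mooney}, invoked earlier to upgrade the Aleksandrov Hessian to $W^{2,1}_{\mathrm{loc}}$, delivers. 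Together with the one-sided bound $\Delta\theta\ge -n$ coming from convexity, everything else is routine.
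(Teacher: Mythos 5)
Your argument is correct and is essentially the same as the paper's: the same cutoff-and-integrate-by-parts scheme, the same use of the second moment of $\nu$ plus hypothesis~\eqref{hyp} and Cauchy--Schwarz to control $\nabla\theta\cdot\nabla V$ by dominated convergence, the same vanishing of the error term via $\|\nabla\chi_R\|_\infty\to0$ and $\int|\nabla\theta|\,e^{-V}<\infty$, and the same monotone-convergence treatment of the left-hand side via $\Delta\theta+n=\Delta\psi\ge0$. Your remark that the finiteness of the limit is forced by the convergence of the right-hand side is a small but welcome point that the paper leaves implicit.
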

\begin{proof}
Since $\int |\nabla \psi|^\alpha \, e^{-V} = \int |y|^\alpha \, d\nu(y)$ and $\nu$ as second moment, we have in view of our assumptions that  $\int |\nabla \theta|^2\, e^{-V} <+\infty$ 
 and $\int |\nabla \theta|\, e^{-V} <+\infty$. Let $h$ be  $C^1$ function on $\R^n$, with values on $[0,1]$,  that is compactly supported and is identically one in a neighborhood of $0\in \R^n$. Introduce the sequence $h_k(x) = h(x/k)$. We have $0 \le h_k \le 1$, $h_k(x) \uparrow 1$ for every $x\in \R^n$ and $\|\nabla h_k\|_\infty \to 0$ as $k\to +\infty$. We have
 $$\int h_k \Delta \theta  \, e^{-V} = -\int \nabla h_k \cdot \nabla \theta \, e^{-V} + \int h_k \nabla \theta \cdot \nabla V  \, e^{-V}.$$
 For the left-hand side, we wan write 
  $$\int h_k \Delta \theta  \, e^{-V}  = \int h_k \Delta \psi  \, e^{-V} - n \int  h_k e^{-V}$$
  and each term converges using the monotone convergence theorem (since $\Delta \psi \ge 0$), giving $ \int \Delta \psi  \, e^{-V} - n \int  e^{-V}=  \int( \Delta \psi - n)   \, e^{-V} = \int \Delta \theta  \, e^{-V}$.
  The first term in the right-hand side tends to zero, since it is bounded by $\|\nabla h_k\|_\infty \int |\nabla \theta|  \, e^{-V}$. For the last term, we conclude by using the dominated convergence theorem, since
  $$2 \int |\nabla \theta \cdot \nabla V|   \, e^{-V} \le \int |\nabla \theta |^2  \, e^{-V}  + \int |\nabla V|^2  \, e^{-V} < +\infty.$$
 \end{proof}

So we have arrived to following elementary formula:
\begin{eqnarray}
H(\nu||\mu_V) & = & \int \cost_V(x,T(x))\, d\mu_V(x) +  \int \big[ \Delta \theta - \log\det(\id + D^2\theta)\big]\, d\mu_V \nonumber \\
&=& \int \cost_V(x,T(x)) \, d\mu_V(x) +\int \big[ \tr D^2\theta - \tr(\log(\id + D^2\theta))\big]\, d\mu_V \nonumber \\
&=& \int \cost_V(x, T(x) )\, d\mu_V(x) + \int \tr(\F(D^2\theta)) \, d\mu_V,\label{mainbound}
\end{eqnarray}
where $\F:[-1, +\infty[\to [0, +\infty[$ stands for the convex (increasing on $\R^+$)   function defined by
\begin{equation}\label{defF1}
\mathcal F(t):=t-\log(1+t), \qquad t\in \R^+.
\end{equation}

Since by definition $ \int \cost_V(x, T(x) )\, d\mu_V(x)  \ge \mathcal W_{\costV}(\mu_V, \nu)$ and $\F \ge 0$, we have  proved in particular the inequality in Proposition~\ref{thm1}. 

The treatment of the cases of equality in Proposition~\ref{thm1} requires a bit of extra work (in particular since $T$ was not \emph{a priori} the $\costV$-optimal map); we postpone it to the end of the present section and go on with the proof of Theorem~\ref{thm2}.

In order to prove Theorem~\ref{thm2}, we have to play a bit with second term in the right-hand side of~\eqref{mainbound}, as it is done in the works we mentioned in the introduction. Indeed, a ``remainder" term of this form 
appears in several mass transport proofs (for instance~\cite{FMP, BK, klartag14, CG, FIL}),  sometimes in equivalent forms such as $\sum (|s_i|+ \frac1{1+|s_i|}-1)$ or $\sum \frac{s_i^2}{1+|s_i|}$ (here $s_i$ refer to the eigenvalues of $D^2\theta$). 
Anyway, the crucial property of the these functions and of the  convex function $t-\log(1+t)$ is that it behaves like $t^2$ for $t$ close to zero, and like $t$ for $t$ large. More precisely, we have
for every $t\in ]-1,+\infty[$ that $F(t) \ge F(|t|)$ and that every $s\ge 0$ 
\begin{equation}\label{prop0}
\frac14 \min(s^2, s) \le \mathcal \F(s) \le \min(s^2, s).
\end{equation}
But we find it more convenient to work with the convex function $\F(|t|)$ rather  than with $\costmin(|t|)=\min(t^2, |t|)$. 

The treatment of the remainder term is stated in the next, central,  Lemma, which is of independent interest. 

\begin{lemma}\label{reste}
Let $\mu$ be a probability mesure on $\R^n$ absolutely continuous with respect to the Lebesgue measure  and $\theta\in W^{2,1}_{\rm{loc}}(\R^n)$ with $D^2\theta \ge -\id$ almost everywhere. We assume that
$|\nabla\theta|\in L^1(\mu)$ with $\int \nabla \theta \, d\mu = 0$. Then,
\begin{equation}\label{goal}
\int \tr(\F(D^2\theta)) \, d\mu \ge c \int \F\big(h(\mu) |\nabla \theta|\big)\, d\mu
\end{equation}
for some numerical constant $c>0$.
\end{lemma}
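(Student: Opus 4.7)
The strategy I would follow is a nonlinear Cheeger argument: apply the $L^1$-isoperimetric inequality~\eqref{defcheeger} to a well-chosen odd composition of a scalar proxy of $\nabla\theta$, then convert the resulting bound from the operator norm of $D^2\theta$ to $\tr\F(D^2\theta)$.

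\emph{Step 1 (Spectral reduction).} Using $\F(t)\ge \F(|t|)$ for $t>-1$ recalled in~\eqref{prop0}, together with convexity of $\F$ on $\R^+$ and $\F(0)=0$, we have pointwise $\tr \F(D^2\theta)=\sum_i \F(\lambda_i)\ge \sum_i \F(|\lambda_i|)\ge \F(\|D^2\theta\|_{\mathrm{op}})$. It therefore suffices to prove $\int \F(\|D^2\theta\|_{\mathrm{op}})\,d\mu \ge c\int \F(h|\nabla\theta|)\,d\mu$.

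\emph{Step 2 (Nonlinear Cheeger).} Introduce the odd, increasing function $\phi(s):=\mathrm{sign}(s)\,\F(h|s|)/h$, so that $|\phi(s)|=\F(h|s|)/h$ and $\phi'(s)=\F'(h|s|)=h|s|/(1+h|s|)\asymp \min(h|s|,1)$. For any fixed unit vector $v\in\R^n$, the scalar $g_v:=v\cdot \nabla\theta$ has $\mu$-mean zero by the hypothesis $\int \nabla\theta\,d\mu=0$. Applying~\eqref{defcheeger} to $\phi\circ g_v$, and absorbing the centering correction $|\overline{\phi(g_v)}|$ by the $L^2$-Poincar\'e inequality $\lambda(\mu)\ge h(\mu)^2/4$ (which is Cheeger's $\lambda\ge h^2/4$ and requires no log-concavity of $\mu$), I would obtain, up to a numerical constant,
$$\int \F(h|g_v|)\,d\mu \;\le\; C\int \F'(h|g_v|)\,|D^2\theta\,v|\,d\mu.$$

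\emph{Step 3 (Young absorption and directional-to-full).} The key pointwise Young-type inequality
$$\F'(h|s|)\,T \;\le\; \tfrac{1}{2}\F(h|s|)+C'\F(T),\qquad s\in\R,\ T\ge 0,$$
follows from a four-case check on $\{h|s|\lessgtr 1\}\times\{T\lessgtr 1\}$ using $\F'\asymp\min(\cdot,1)$ and $\F\asymp\min(t^2,t)$. Integrating with $T=|D^2\theta\,v|\le \|D^2\theta\|_{\mathrm{op}}$ and absorbing $\tfrac{1}{2}\F(h|g_v|)$ into the left-hand side of Step~2 gives, for every fixed unit $v$,
$$\int \F(h|v\cdot\nabla\theta|)\,d\mu \;\le\; C''\int \F(\|D^2\theta\|_{\mathrm{op}})\,d\mu.$$
To pass from $v\cdot\nabla\theta$ to $|\nabla\theta|$ on the left, I would split $\R^n$ into $\{h|\nabla\theta|\le 1\}$ and $\{h|\nabla\theta|>1\}$: in the small regime, apply the $L^2$-Poincar\'e consequence of Cheeger componentwise to $\partial_i\theta$ to get $h^2\int |\nabla\theta|^2\le 4\int \|D^2\theta\|_{HS}^2$, and then use $s^2\le 4\F(s)$ for $|s|\le 1$ to replace $\|D^2\theta\|_{HS}^2$ by $\tr\F(D^2\theta)$ in this bounded-spectrum regime; in the large regime, use $L^1$-Cheeger applied directly to $\partial_i\theta$ together with $\F(t)\asymp t$ for $t\ge 1$.

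\emph{Main obstacle.} The delicate point is the passage in Step~3 from a fixed unit vector $v$ (needed for scalar Cheeger) to a statement involving $|\nabla\theta|$ without losing a factor of $\sqrt n$: naive averaging $\int_{S^{n-1}}|v\cdot u|\,d\sigma(v)\asymp|u|/\sqrt n$ is fatal. The resolution is to avoid spherical averaging entirely by the regime splitting just described, exploiting the pointwise spectral domination $\F(|D^2\theta\,v|)\le \tr\F(D^2\theta)$ and the fact that in each regime only one of the two extremal behaviours of $\F$ is needed. A secondary subtlety is the centering error in Step~2: $\phi\circ g_v$ is not exactly mean-zero, and controlling the shift requires the $L^2$-Poincar\'e inequality (consequence of Cheeger) as an auxiliary input.
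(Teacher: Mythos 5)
There is a genuine gap, and it appears already at Step 1. The reduction $\tr\F(D^2\theta)\ge\F(\|D^2\theta\|_{\rm op})$ is correct but fatally lossy: the reduced target $\int\F(\|D^2\theta\|_{\rm op})\,d\mu\ge c\int\F(h(\mu)|\nabla\theta|)\,d\mu$ is \emph{false} with a dimension-free constant. Take $\mu=\gamma$ the standard Gaussian (so $h(\gamma)$ is a universal constant) and $\theta(x)=\eps(|x|^2-n)/2$ with $\eps$ small. Then $D^2\theta=\eps\,\id$, so the left-hand side is $\F(\eps)\asymp\eps^2$, while $|\nabla\theta|=\eps|x|\asymp\eps\sqrt n$, so for $\eps\le n^{-1/2}$ the right-hand side is $\asymp\eps^2 n$. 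The correct statement needs the full trace $\tr\F(\eps\,\id)=n\F(\eps)$ on the left; this factor of $n$ is exactly what the paper's Fact~\ref{trace}, namely $\tr(\F(A))\ge\frac18\sint\F\big(\sqrt n|Au|\big)\ds(u)$, retains --- the $\sqrt n$ inside $\F$ is there precisely to cancel the $1/\sqrt n$ loss of the spherical average $\sint|X\cdot u|\ds(u)\asymp|X|/\sqrt n$, which is why the paper can afford the spherical averaging you are trying to avoid.

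The regime splitting proposed in Step 3 to pass from $v\cdot\nabla\theta$ to $|\nabla\theta|$ does not repair this. Cheeger and Poincar\'e inequalities are global: once applied, their right-hand sides are integrals over all of $\R^n$ and cannot be restricted to a ``bounded-spectrum regime''. Concretely, in the small regime you would need $\int\|D^2\theta\|_{HS}^2\,d\mu\le C\int\tr(\F(D^2\theta))\,d\mu$, which fails wherever $D^2\theta$ has an eigenvalue $M\gg1$ (there $\|D^2\theta\|_{HS}^2\ge M^2$ while $\tr\F(D^2\theta)\asymp M$); in the large regime, applying $L^1$-Cheeger to each $\partial_i\theta$ produces $\int\sum_i|D^2\theta\, e_i|\,d\mu$, which is not dominated by $C\int\tr(\F(D^2\theta))\,d\mu$ wherever the eigenvalues are small (if all eigenvalues equal $\eps$, the former contributes $n\eps$ and the latter $\asymp n\eps^2$). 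These two failure modes are complementary, and splitting according to the size of $|\nabla\theta(x)|$ cannot separate them, since that set bears no relation to where the spectrum of $D^2\theta$ is large or small. Your Step 2, by contrast, is essentially sound: it is a directional version of the Bobkov--Houdr\'e inequality (the paper's Fact~\ref{BH}), proved there by exactly the odd-composition-plus-Young-absorption device you describe. What is missing is the paper's other ingredient, the trace inequality of Fact~\ref{trace}, which is the only place where the needed factor of $n$ is produced.
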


Note that our assumption $\int x \, d\mu_V = \int x \, d\nu$ rewrites as  $\int \nabla \theta\, d\mu_V = 0$ so if we use in~\eqref{mainbound} the previous Lemma with $\mu=\mu_V$ and $\theta$ our displacement function, we find
$$H(\nu||\mu_V)  \ge  \int \ncost_V(x, T(x)) \, d\mu_V \ge W_{\ncost_V}(\mu_V, \nu)$$
as claimed in Theorem~\ref{thm2}. 

So it only remains to prove the previous Lemma. 

Denote by $\sigma$ the uniform probability measure on $S^{n-1}$. Recall that for every vector $X\in \R^n$ we have
$$n \sint (X\cdot u)^2 \ds(u) = |X|^2$$
and that 
\begin{equation}\label{smean}
c|X|\le \sqrt n \sint |X\cdot u |\ds(u) \le |X|,
\end{equation}
for some numerical constant $c>0$.

We will use the following Fact, the proof of which is postponed below. 

\begin{fact} \label{trace}
Let $A$ be a symmetric matrix with eigenvalues $>-1$. Then
$$
\tr(\F(A)) \ge \frac18 \sint \F\big(\sqrt n |Au| \big)  \, \ds(u) .
$$
\end{fact}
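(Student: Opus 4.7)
The approach is to diagonalize $A$ and handle separately its ``large'' and ``small'' spectral parts, exploiting the two regimes of $\F$ encoded in~\eqref{prop0}. Let $\lambda_1,\dots,\lambda_n$ be the eigenvalues of $A$ with orthonormal eigenvectors $e_1,\dots,e_n$, and for $u\in S^{n-1}$ write $u_i = u\cdot e_i$. Since $\F(t)\ge \F(|t|)$ on $(-1,+\infty)$, one has $\tr(\F(A))\ge \sum_i \F(|\lambda_i|)$, so it suffices to bound this sum from below by $\tfrac18 \sint \F(\sqrt n\,|Au|)\ds$.

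The key step is the splitting $A = A_L + A_S$, where $A_L$ and $A_S$ share the eigenvectors of $A$ and have eigenvalues $\lambda_i\mathbf 1_{|\lambda_i|>1}$ and $\lambda_i\mathbf 1_{|\lambda_i|\le 1}$ respectively. Since $|Au|\le |A_L u|+|A_S u|$ and $\F$ is nondecreasing on $\R^+$, convexity of $\F$ yields
$$\F\bigl(\sqrt n\,|Au|\bigr) \le \tfrac12\F\bigl(2\sqrt n\,|A_L u|\bigr) + \tfrac12\F\bigl(2\sqrt n\,|A_S u|\bigr).$$

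For the large piece I use the universal bound $\F(s)\le s$, the triangle inequality $|A_L u|\le \sum_{|\lambda_i|>1}|\lambda_i|\,|u_i|$, and the moment estimate $\sint |u_i|\ds \le \bigl(\sint u_i^2\ds\bigr)^{1/2} = 1/\sqrt n$, which give $\sint \tfrac12\F(2\sqrt n\,|A_L u|)\ds \le \sum_{|\lambda_i|>1}|\lambda_i|$. For the small piece I use $\F(s)\le s^2$ together with $\sint |A_S u|^2\ds = \tfrac1n\sum_{|\lambda_i|\le 1}\lambda_i^2$, which give $\sint \tfrac12\F(2\sqrt n\,|A_S u|)\ds \le 2\sum_{|\lambda_i|\le 1}\lambda_i^2$. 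Inequality~\eqref{prop0} then provides $|\lambda_i|\le 4\F(|\lambda_i|)$ when $|\lambda_i|>1$ and $\lambda_i^2\le 4\F(|\lambda_i|)$ when $|\lambda_i|\le 1$, so summing the two contributions yields the desired $\sint \F(\sqrt n\,|Au|)\ds \le 4\sum_{|\lambda_i|>1}\F(|\lambda_i|)+8\sum_{|\lambda_i|\le 1}\F(|\lambda_i|)\le 8\tr(\F(A))$.

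There is no real obstacle here; the proof is careful bookkeeping of constants. The delicate point is simply to split exactly at the threshold $|\lambda_i|=1$ (where the competing bounds in $\min(s^2,s)$ change roles), so that $\F(s)\le s$ and $\F(s)\le s^2$ are each applied where they are sharp and then matched with the corresponding lower bounds from~\eqref{prop0} (with constant $1/4$); this is precisely what produces the final factor $1/8$.
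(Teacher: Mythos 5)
Your proof is correct, and it takes a genuinely different route from the paper. You split the spectrum of $A$ at the threshold $|\lambda_i|=1$, bound $\F$ from above by $s$ on the large part and by $s^2$ on the small part, compute the first and second moments of $|u_i|$ on the sphere explicitly (Cauchy--Schwarz gives $\sint |u_i|\ds \le n^{-1/2}$, and $\sint u_i^2\ds = 1/n$), and then match each piece with the corresponding lower bound $\F(s)\ge\frac14\min(s^2,s)$ from~\eqref{prop0}; every step checks out and the bookkeeping indeed delivers the constant $1/8$. The paper instead works with $H=|A|$ and the identity $|He_i|^2 = n\sint (He_i\cdot u)^2\ds$, pulls the spherical average out via Jensen applied to the \emph{concave} function $\sqrt{\F}$, and then recombines the coordinates using the square-function inequality $\sum_i\F(s_i)\ge\frac14\F\bigl(\sqrt{\sum_i s_i^2}\bigr)$ (inequality~\eqref{prop2}), together with $\F(\sqrt s)\le\sqrt{\F(s)}\le 2\F(\sqrt s)$. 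Your argument is more elementary in that it avoids the concavity of $\sqrt\F$ and the auxiliary inequality~\eqref{prop2} entirely, replacing them by explicit moment computations; what the paper's route buys is precisely the isolation of~\eqref{prop2} as the step responsible for the $\sqrt n$ loss for product measures, a point the author wants to flag in the surrounding discussion. Both arguments rest on the same underlying mechanism --- the quadratic/linear dichotomy of $\F$ --- and both produce the constant $1/8$.
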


We will combine this with the following isoperimetric type inequality. It is due to Bobkov and Houdr\'e~\cite{BH}, where it is stated with the median. We will include a proof below for completeness. 

\begin{fact}[Bobkov-Houdr\'e]\label{BH}
Let $\mu$ be a probability measure on $\R^n$. For every regular enough $f:\R^n\to \R$ we have
\begin{equation}\label{ineq:BH}
\int \F\big( \big|f - \smallint f\, d\mu\big|\big) \, d\mu \le c \int \F\Big(\frac1{h(\mu)} |\nabla f|\Big) \, d\mu
\end{equation}
for some numerical constant $c>0$ (for instance $c=3\times 4^3$ works). 
\end{fact}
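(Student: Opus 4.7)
The plan is to combine the Cheeger $L^1$-inequality~\eqref{defcheeger} with a layer-cake representation of $\int\F(|f-m|)\,d\mu$ around a median, and then to absorb a small portion of the resulting term into the left-hand side.

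First I would reduce to a median: let $m$ be a median of $f$ under $\mu$, so that both $\mu(\{f>m\})$ and $\mu(\{f<m\})$ are at most $1/2$. Using $\F'(s)=s/(1+s)$ and the layer-cake formula,
\[
\int \F(|f-m|)\,d\mu \;=\; \int_0^\infty \F'(s)\bigl[\mu(\{f-m>s\})+\mu(\{m-f>s\})\bigr]\,ds.
\]
For every $s>0$ the super-level set $\{f-m>s\}$ has $\mu$-measure at most $1/2$, so approximating its indicator by smooth Lipschitz functions in~\eqref{defcheeger} yields the one-sided Cheeger inequality $h(\mu)\,\mu(\{f-m>s\})\le \Pi(s)$, where $\Pi(s)$ is the perimeter of $\{f-m>s\}$ with respect to $\mu$, and the same holds for $\{m-f>s\}$. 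Integrating in $s$ and applying the coarea formula to $(f-m)_+$ and $(m-f)_+$ recombines the perimeter integrals into $\int\F'(|f-m|)\,|\nabla f|\,d\mu$, so that
\[
h(\mu)\int \F(|f-m|)\,d\mu \;\le\; \int\F'(|f-m|)\,|\nabla f|\,d\mu.
\]

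The crucial step is to bound this right-hand side by $C\,h(\mu)\int\F(|\nabla f|/h(\mu))\,d\mu$ plus a small multiple of $h(\mu)\int\F(|f-m|)\,d\mu$ that will be absorbed. To that end I would split pointwise according to whether $|\nabla f|/h(\mu)\ge |f-m|/100$ or not. Using the elementary bound $\F'(t)\le \min(t,1)$ together with~\eqref{prop0}, a short case analysis on the four sub-regions determined by $|f-m|\lessgtr 1$ and $|\nabla f|/h(\mu)\lessgtr 1$ gives, on the first region, $\F'(|f-m|)|\nabla f|\le C\,h(\mu)\,\F(|\nabla f|/h(\mu))$ for a universal $C$, and on the complementary region, $\F'(|f-m|)|\nabla f|\le \tfrac{1}{25}\, h(\mu)\,\F(|f-m|)$. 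Substituting these pointwise inequalities above and absorbing the $\tfrac{1}{25}$-term yields $\int\F(|f-m|)\,d\mu\le C_1\int\F(|\nabla f|/h(\mu))\,d\mu$ for a universal $C_1$.

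To finish, I would pass from the median to the mean: the $\min(t^2,t)$-shape of $\F$ given by~\eqref{prop0} implies $\F(2t)\le 4\F(t)$, hence $\F(a+b)\le 2\F(a)+2\F(b)$ for $a,b\ge 0$. Writing $\overline f:=\int f\,d\mu$ and using this together with Jensen's inequality $\F(|\overline f-m|)=\F\bigl(|\int(f-m)\,d\mu|\bigr)\le \int\F(|f-m|)\,d\mu$ gives $\int\F(|f-\overline f|)\,d\mu\le 4\int\F(|f-m|)\,d\mu$, and the Fact follows with combined constant $c$ (one can check that the threshold $1/100$ and an honest bookkeeping produce $c=3\cdot 4^3$). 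The main obstacle is the absorption step: the splitting threshold must be tuned so that the multiplicative constant on $\F(|f-m|)$ in the complementary region stays strictly less than $1$, which is exactly what the $\min(t^2,t)$-shape of $\F$ makes possible; the rest is routine.
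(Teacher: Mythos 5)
Your proof is correct, and it shares the paper's skeleton (reduce to a median $m$, establish an inequality of the form $h(\mu)\int \F(|f-m|)\,d\mu \lesssim \int \F'(|f-m|)\,|\nabla f|\,d\mu$, then absorb part of the right-hand side back into the left), but the two middle steps are implemented by genuinely different mechanisms. For the intermediate inequality, the paper applies the functional Cheeger inequality (in its median form, costing a factor $3$) to the auxiliary function $g=\pm\F(|f-m|)$, whose gradient is exactly $\F'(|f-m|)|\nabla f|$; you instead pass through the isoperimetric (set-level) form of Cheeger's inequality on super-level sets and recombine via layer-cake and co-area, which is equally valid and even saves the factor $3$, at the price of invoking the functional-to-geometric equivalence and the co-area formula rather than staying purely at the functional level. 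For the absorption, the paper uses the Young-type inequality $st\le 4\F(s)+\frac1{16}t^2$ for $t=\F'(|f-m|)\in[0,1]$ together with $\F'(s)^2\le 4\F(s)$, whereas you split pointwise on $|\nabla f|/h(\mu)\gtrless |f-m|/100$ and use $\F'(a)\,a\le\min(a^2,a)\le 4\F(a)$ on the complementary region; both give a coefficient strictly less than $1$ in front of $\int\F(|f-m|)\,d\mu$ and the argument closes. One small caveat: your parenthetical claim that the threshold $1/100$ reproduces the constant $c=3\times 4^3$ is not borne out by your own estimates (the case analysis on the region $|\nabla f|/h(\mu)\ge|f-m|/100$ yields a constant of order $400$, leading to a final constant well above $192$ even after optimizing the threshold), but since the Fact only asserts the existence of some numerical constant this does not affect the validity of the proof.
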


With these two facts in hand, we can now finish the proof of~\eqref{goal}. We have, by Fact~\ref{trace} and Fubini's theorem, that
\begin{equation}\label{step}
\int \tr(\F(D^2\theta)) \, d\mu\ge  \frac18\sint\!\!\int \F\big(\sqrt n |D^2\theta u|\big)\, d\mu \ds (u) 
\end{equation}

For any fixed vector $u \in S^{n-1}$ we have that the  function $g(x)=h(\mu)\sqrt n\, \nabla \theta(x) \cdot u$ is  $W^{1,1}_{\rm{loc}}(\R^n)$ with derivative  $\nabla g(x) =h(\mu)\sqrt n \,  (D^2 \theta(x)) u$, and $\int g\, d\mu_V=0$. So we have by applying Fact~\ref{BH} that
$$\int \F\big(\sqrt n |(D^2\theta) u|\big)\, d\mu
\ge \frac1{3\times 4^3} \int \F(h(\mu) \sqrt n \,  |\nabla\theta \cdot u | )\, d\mu.$$
Back to~\eqref{step}, integrating the previous inequality with respect to $\ds(u)$, using that $\F$ is convex and~\eqref{smean} we find
$$\int \tr(\F(D^2\theta)) \, d\mu \ge \frac1{3\times 4^3\times 8} \int \F\Big(h(\mu) \sqrt n \sint |\nabla\theta\cdot u | \ds(u)\Big)d\mu \ge  c \int \F(h(\mu) |\nabla \theta|)\, d\mu.$$

This ends the proof of Lemma~\ref{reste}, modulo the two facts above that we prove now. 

\begin{proof}[Proof of Fact~\ref{trace}]
Let us collect first some straightforward properties of  $\F$, or equivalently, in view of~\eqref{prop0},  of $\min(s^2, |s|$).  These functions commute with power functions. In particular, we shall use that
\begin{equation}\label{prop1}
\forall s\ge 0 , \qquad \F(\sqrt s ) \le \sqrt{\F (s)} \le 2 \F(\sqrt s ). 
\end{equation}
Note  that   $\F(2 s) \le 4 \F(s)$. Observe also that for a finite family $s_1, \ldots , s_k \ge 0$ we have
\begin{equation}\label{prop2}
\sum_{i\le k} \F(s_i) \ge \frac14\F\Big( \sqrt{\sum_{i\le k} s_i^2}\Big).
\end{equation}
Indeed, if we denote by $s_{\max}$ the largest number, we see using~\eqref{prop0} that
$$\sum_{i\le k} \F(s_i) \ge \frac14  \sum_{i\le k} \min(s_{i}, s_{i}^2) \ge 
\frac14 \min(s_{\max}, s_{\max}^2) \sum_{i\le k} \Big(\frac{s_i}{s_{\max}}\Big)^2 ;$$
Then, distinguishing between  $s_{\max} \le 1$, and  $s_{\max} \ge 1$, a case for which we replace it by using  $ s_{\max} \le \sqrt{\sum_{i\le k} s_i^2}$, we find
$$\sum_{i\le k} \F(s_i)  \ge  \frac14 \min\Big(\sum_{i\le k} s_i^2, \,  \sqrt{\sum_{i\le k} s_i^2}\Big) \ge \frac14\F\big( \sqrt{\sum_{i\le k} s_i^2}\big) .$$
Let us mention that inequality~\eqref{prop2} will be the one responsible for the loss of a factor $\sqrt n$ in the case of product measures. 

Back to the proof of the Fact, let us notice  that $\F(A) \ge \F(|A|)$, since $\F(s_i) \ge \F(|s_i|)$ for any eigenvalue $s_i$ of $A$. Denote 
$$H:=|A| = \sqrt{A^\ast A};$$ it is a nonnegative symmetric matrix. 
Let  $(e_1, \ldots, e_n)$ be an orthonormal basis of eigenvectors of $A$. Then 
$$\tr(\F(H)) = \sum_{i\le n} |\F(H) e_i|  = \sum_{i\le n} \F(|H e_i|) \ge \frac12\sum_{i\le n} \sqrt \F( |H e_i|^2) $$
where we used~\eqref{prop1}. Let us mention in passing that using the convexity of $\F$, we can establish more generally that for for any $v\in S^{n-1}$ we have
$|\F(H)v | \ge \frac12 \sqrt{\F(|Hv|^2)}.$

From this, the fact that $\sqrt \F$ is \underline{concave} on $\R^+$, then again~\eqref{prop1} and finally~\eqref{prop2} we find
\begin{multline*}
\tr(\F(H)) \ge  \frac12 \sum_{i\le n}  \sqrt\F\big(|He_i|^2\big) = \frac12 \sum_{i\le n}  \sqrt\F\Big(n \sint (He_i \cdot u)^2\ds(u)\Big) \\
 \ge   \frac12\sint \sum_{i\le n}  \sqrt\F(n (He_i\cdot u)^2) \, ds(u) 
\ge  \frac12 \sint \sum_{i\le n}  \F(\sqrt n |He_i\cdot u|) \, ds(u)  \\
\ge  \frac18  \sint \F\Big(\sqrt n \sqrt{\sum_{i\le n}|He_i\cdot u|^2}\Big) \, ds(u)  = \frac18 \sint \F\big(\sqrt n |Hu| \big)  \, ds(u) .
\end{multline*}
To conclude, use that $|Hu|^2 = H^2 u \cdot u = A^2 u \cdot u = |Au|^2$. 

\end{proof}

\begin{proof}[Proof of Fact~\ref{BH}]
By scaling the metric, we can assume that $h(\mu)=1$. More precisely, we can change the scalar produce $x\cdot y$ into $h(\mu)^{-1} x \cdot y$, which changes the gradient accordingly in~\eqref{defcheeger} and~\eqref{ineq:BH}. 

Denote by $m$ a $\mu$-median of $f$. By a standard argument,  it is enough to prove that 
$$4 \int \F \big(|f-m|\big)\, d\mu \le 3\times 4^2\int \F(|\nabla f |)\, d\mu.$$
Indeed, since $\F(2t ) \le 4\F(t)$ and since $\F$ is convex increasing on $\R^+$, we have for any function $g$ with $\mu$-median $m_g$:
$$\int \F\big(\big| g - \smallint g\, d\mu\big|\big) \, d\mu \le 2 \int \F (|g-m_g|) \, d\mu + 2  \F\big(\big|\smallint (g-m_g)\, d\mu \big|\big) \le 4 \int \F(|g-m_g|)\, d\mu .$$
The same kind of argument shows that one can use a median $m_g$ instead of the mean, in the definition~\eqref{defcheeger}. Indeed, for any $g\in L^1(\mu)$ with median $m_g$ we have
$$\int |g-m_g| \, d\mu \le \int \big| g - \smallint g\, d\mu\big| \, d\mu  +  \big| \smallint g\, d\mu  - m_g \big| .$$
We can assume that $m_g \ge \smallint g \, d\mu$ (otherwise use $-g$), and by the definition of $m_g$ and by Markov's inequality we have
$$\frac12 \le \mu(\{ g \ge m_g\}) \le \mu(\{ |g - \smallint g\, d\mu  | \ge m_g - \smallint g\, d\mu \})\le \frac1{m_g - \smallint g \, d\mu } \int \big| g- \smallint g\, d\mu \big|\, d\mu ,$$
and so $\big|m_g - \smallint g\, d\mu \big| \le 2 \int \big|g - \smallint g \, d\mu \big| \, d\mu$. Therefore, we have
$$\int |g-m_g| \, d\mu \le 3 \int \big|g - \smallint g \, d\mu \big| \, d\mu  \le 3 \int |\nabla g |\, d\mu.$$

Given our $f$ with $\mu$-median $m$, let us introduce the (continuous) function $g$ such that
$$g(x)=\begin{cases} 
\F(|f(x)-m|) & \text{ if } f(x) \ge m \\
-\F\big(|f(x)-m|\big) & \text{ if } f(x) < m.
\end{cases}
$$
Since $\F \ge 0$, the function $g$ has zero $\mu$-median. Therefore 
\begin{equation}\label{cheeger}
\int \F(|f-m|)\, d\mu = \int |g| \, d\mu \le 3 \int |\nabla g | \, d\mu .
\end{equation}
We will now use an argument inspired by~\cite{klartag14}. Let us observe that for every $s\in \R^+$, $t\in [0,1]$ (this is only good choice to estimate the Legendre transform of $\F$), we have
$$st \le  4\F(s) + \frac1{16} t^2 .$$
Indeed, for $s\ge 1$ the inequality is obvious since $4\F(s) \ge s \ge st$, and for $s<1$ use that $4\F(s) \ge s^2$ to complete the square.  Since $\F' \in [0,1]$ on $\R^+$, we have
\begin{multline*}
\int |\nabla g|\, d\mu =  \int \F'(|f-m|)\,  |\nabla f |\, d\mu \le  4 \int \F(|\nabla f|) \, d\mu + \frac1{16} \int \F'(|f-m|)^2\, d\mu \\
\le 4  \int \F(|\nabla f|) \, d\mu + \frac1{4} \int \F(|f-m|)\, d\mu,
\end{multline*}
where the second inequality follows from $\F'(s)^2 \le 4 \F(s)$ for every $s\in \R^+$ (this can can be seen, for instance, by computing $(4\F-\F'^2)'(s)=\frac{2s(1+2s + 2s^2)}{(1+s)^3} \ge 0$). 
Plugging this in~\eqref{cheeger} we find
$$\frac14\int \F(|f-m|)\, d\mu  \le  3\times 4  \int \F(|\nabla f|) \, d\mu,$$
which gives the desired inequality.
\end{proof}

This completes the proof of Lemma~\ref{reste} and Theorem~\ref{thm2}. It only remains to treat the cases of equality in Proposition~\ref{thm1}.

\begin{proof}[Determination of equality cases in Proposition~\ref{thm1} ]
The idea is that, if equality holds  in Proposition~\ref{thm1}, and if   $\nu$ and $\mu_V$ have same barycenter, a situation that can be imposed by translating $\nu$ (provided we know that translation preserves equality cases), then we can apply Theorem~\ref{thm2} and conclude that $W_1(\mu_V, \nu)=0$, which implies $\nu = \mu_V$. Oddly enough, the converse requires also some work ; even the fact that there is equality when $\nu=\mu_V$ is not straightforward, and actually requires the convexity of $V$. 

We will prove the stronger result of Remark~\ref{rem:equality}.  Given a vector $v\in \R^n$, let us denote by $T_v \nu$ the translation by $v$ of the probability $\nu$; if $d\nu(x) = F(x) \, dx$ , then $dT_v \nu(x) = F(x-v)\, dx$.  The following Lemma is essential as it establishes the translation invariance of the inequality under study. 

\begin{lemma}[Translation invariance]\label{translation}
With the notation of Proposition~\ref{thm1}, we have, for any probability $\nu$ and any vector $v\in \R^n$, that
$$ H(T_v \nu ||\mu_V) - \mathcal W_{\costV}(\mu_V, T_v \nu) =   H(\nu ||\mu_V) - \mathcal W_{\costV}(\mu_V, \nu)  $$
\end{lemma}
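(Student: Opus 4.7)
The plan is to compute both sides directly and observe that the awkward correction term coming from the entropy exactly cancels the awkward correction term coming from the transport cost. Both corrections turn out to equal $\int [V(\cdot+v)-V]\,d\nu$, so the difference is translation invariant.

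First I would handle the entropy. If $d\nu = f\,d\mu_V$, then the density of $T_v\nu$ with respect to Lebesgue measure is $f(\cdot-v)\,e^{-V(\cdot-v)}/Z$, so its density with respect to $\mu_V$ is $\tilde f(x) = f(x-v)\,e^{V(x)-V(x-v)}$. Plugging into the definition of relative entropy and changing variables $y=x-v$ gives
\[
H(T_v\nu\,\|\,\mu_V) = \int f(y)\log f(y)\,d\mu_V(y) + \int f(y)\,[V(y+v)-V(y)]\,d\mu_V(y) = H(\nu\,\|\,\mu_V) + \int [V(y+v)-V(y)]\,d\nu(y).
\]

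Next I would handle the transport cost. The map $\Phi(x,y)=(x,y+v)$ is a bijection between couplings of $(\mu_V,\nu)$ and couplings of $(\mu_V,T_v\nu)$: if $\pi$ has marginals $(\mu_V,\nu)$, then $\Phi_\#\pi$ has marginals $(\mu_V,T_v\nu)$, and vice versa. Using the explicit form of $\costV$ in~\eqref{defcost2}, a direct expansion yields
\[
\costV(x,y+v) = \costV(x,y) + [V(y+v)-V(y)] - \nabla V(x)\cdot v.
\]
Integrating against $\pi$, the first marginal being $\mu_V$ reduces the last term to $v\cdot \int\nabla V\,d\mu_V$, and the integration-by-parts argument proved earlier in the paper (justified by hypothesis~\eqref{hyp}, exactly as in the Fact following equation~\eqref{mainbound} applied with $\theta$ a coordinate function) shows that $\int\nabla V\,d\mu_V=0$. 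The middle term depends only on $y$, so integrates to $\int [V(y+v)-V(y)]\,d\nu(y)$. Taking the infimum over $\pi$ (the bijection $\Phi$ matches up the infima) gives
\[
\mathcal W_{\costV}(\mu_V, T_v\nu) = \mathcal W_{\costV}(\mu_V, \nu) + \int [V(y+v)-V(y)]\,d\nu(y).
\]

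Subtracting the two identities, the correction terms cancel and we obtain the claimed equality. The main potential subtlety is justifying $\int \nabla V\,d\mu_V=0$ under the mere local Lipschitz hypothesis~\eqref{hyp}, but this is exactly the integration-by-parts argument already carried out in the paper (truncate by a sequence $h_k$ of compactly supported cut-offs, use $|\nabla V|\in L^2(\mu_V)$ to dominate, and pass to the limit); no new input is required.
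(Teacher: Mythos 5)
Your proof is correct, and the transport part takes a slightly different (and in fact more economical) route than the paper. Both arguments rest on the same pointwise identity $\costV(x,y+v)=\costV(x,y)+[V(y+v)-V(y)]-\nabla V(x)\cdot v$, but the paper exploits it in telescoped form to show that the translated coupling $T_{(0,v)}\pi$ of an \emph{optimal} $\pi$ is again $\costV$-cyclically monotone, hence optimal, and only then integrates; this requires invoking the existence of an optimal coupling and the cyclical-monotonicity characterization of optimality. You instead observe that integrating the identity against \emph{any} coupling $\pi$ of $(\mu_V,\nu)$ shifts the total cost by the constant $\int[V(y+v)-V(y)]\,d\nu - v\cdot\int\nabla V\,d\mu_V$, which is independent of $\pi$, so the bijection $(x,y)\mapsto(x,y+v)$ between the two sets of couplings shifts the infimum by exactly that constant. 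This bypasses optimal couplings and cyclical monotonicity entirely, which is a genuine simplification; as a by-product, the paper's version additionally records that optimal couplings are mapped to optimal couplings, a fact you do not need. Your justification of $\int\nabla V\,d\mu_V=0$ via the cut-off/integration-by-parts argument under hypothesis~\eqref{hyp} (which the paper uses here without comment) is sound, and the entropy computation is identical to the paper's.
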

\begin{proof}
To simplify the notation, we can assume that $\int e^{-V} = 1$, and also  that $d\nu(x) = f(x) d\mu_V(x) = f(x) e^{-V(x)} dx$. 
To treat to transportation term, we will need the following observation:
\begin{fact}
Given $v\in \R^n$, introduce $\tilde v = (0,v) \in \R^{2n}$. Let $\mu$ and $\nu$ be two probability measures on $\R^n$ and $\costV$ be the cost from Proposition~\ref{thm1}. If $\pi$ is a $\costV$-optimal coupling for $(\mu, \nu)$ then $T_{\tilde v} \pi$ is a $\costV$-optimal coupling for $(\mu, T_v \nu)$.  
\end{fact}
Let us prove this fact. The coupling condition is clear, so we only need to check that $T_{\tilde v} \pi$ is  $\costV$-optimal when $\pi$ is. Equivalently, by the characterization of optimality in terms of cyclical monotony (see~\cite[Chapter~5]{V}), it suffices to check that the support of $T_{\tilde v} \pi$ is $\costV$-cyclically monotone when the support of $\pi$ is 
$\costV$-cyclically monotone. Let $(x_1, y_1), \ldots , (x_k,y_k)$ be arbitrary points of $\R^{2n}$, with the convention that $(x_{k+1}, y_{k+1}):=(x_1,y_1)$. We have
\begin{eqnarray*}
\sum_{i=1}^k \costV(x_i, y_{i+1}+v) - \sum_{i=1}^{k} \costV(x_{i} , y_i+v)  & = & -\sum_{i=1}^k \nabla V(x_i) \cdot (y_{i+1}-y_i) \\
 &= &\sum_{i=1}^k \costV(x_i, y_{i+1}) - \sum_{i=1}^{k} \costV(x_{i} , y_i)  ,
\end{eqnarray*}
which shows that, indeed, the support of $T_{\tilde v} \pi$ is $\costV$-cyclically monotone if and only if the support of $\pi$ is $\costV$-cyclically monotone. 

With this Fact in hand, let us finish the proof of Lemma~\ref{translation}. Let $\pi$ be a $\costV$-optimal coupling for $(\mu_V, \nu)$. Then by the previous Fact we have that
$$\mathcal W_{\costV}(\mu_V, T_v \nu) -   \mathcal W_{\costV}(\mu_V, \nu) = \iint \big[\costV(x,y+v) -\costV(x,y)]\, d\pi(x) = \int \big[V(y+v) -V(y)]\, d\nu(y),$$
where we used that $\iint \nabla V(x)\cdot v \, d\pi(x,y) = \int \nabla V(x)\cdot v\, e^{-V(x)}\, dx = 0$.  

The entropic terms are easier to analyse. Since $dT_v\nu(x) = f(x-v) e^{-V(x-v)}\, dx = f(x-v) e^{-V(x-v)+V(x)} d\mu_V(x)$,  we have
\begin{multline*}
H(T_v\nu || \mu_V) - H(\nu ||\mu_V) = \int \big[\log(f(x-v)) - V(x-v) + V(x)]\, f(x-v) e^{-V(x-v)} \, dx \\
- \int  \log f(x) \log(f(x)) e^{-V(x)}\, dx =  \int \big[-V(x) +V(x+v)]\, d\nu(x) .
\end{multline*}
By subtracting the previous two equations, we obtain the conclusion of Lemma~\ref{translation}.
\end{proof}

Next, the r\^ole of the convexity of $V$ can be summarized as follows. 

\begin{lemma}\label{convexity_of_V}
Let $V:\R^n \to \R$ be locally Lipchitz function satisfying~\eqref{hyp} and  $\costV$  be the cost given by~\eqref{defcost2}, which is well defined  for  almost every $x\in \R^n$. If there exists an absolutely continuous probability measures $\mu$  with support $\R^n$ such that $\mathcal W_{\costV} (\mu, \mu)= 0$, then $V$ is convex on $\R^n$. Conversely, if $V$ is convex, then $\mathcal W_{\costV}(\mu, \mu)=0$ for every absolutely continuous probability measure $\mu$. 
\end{lemma}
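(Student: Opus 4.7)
The converse direction, $V$ convex $\Rightarrow \mathcal W_{\costV}(\mu,\mu)=0$, is immediate: the diagonal coupling $\pi_0=(\id,\id)_\ast\mu$ is admissible with cost $\int \costV(x,x)\,d\mu(x)=0$, since $\costV(x,x)=0$ at every differentiability point of $V$ and these form a full-$\mu$-measure set; meanwhile any other coupling $\pi$ satisfies $\int \costV\,d\pi \ge 0$ by pointwise nonnegativity of $\costV$ under convexity, so the infimum equals $0$.

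For the main implication, suppose $\mathcal W_{\costV}(\mu,\mu)=0$ with $\mu$ as in the hypothesis. Then the diagonal coupling is optimal, and the plan is to extract from this the two-point $\costV$-cyclical monotonicity on its support via a concrete swap. Pick any $x_0,y_0\in\R^n$ and disjoint open balls $B_1\ni x_0$, $B_2\ni y_0$ with equal $\mu$-mass $\alpha>0$; such radii exist since $\mu$ has full support and is atomless. Let $T:B_1\to B_2$ push $\mu|_{B_1}$ forward to $\mu|_{B_2}$ (for instance the Brenier map between these measures once normalized), and define $S$ to equal $T$ on $B_1$, $T^{-1}$ on $B_2$, and the identity elsewhere, so that $S_\ast \mu=\mu$. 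Then $(\id,S)_\ast \mu$ is admissible for $(\mu,\mu)$, and a direct computation using the change of variables $y=T(x)$ on the $B_2$ piece gives its total cost as
$$\int_{B_1}\bigl[\costV(x,T(x))+\costV(T(x),x)\bigr]\,d\mu(x)=\int_{B_1}h(x,T(x))\,d\mu(x),$$
where $h(x,y):=\costV(x,y)+\costV(y,x)=(\nabla V(y)-\nabla V(x))\cdot(y-x)$ is the symmetrized cost. By the hypothesis this integral is $\ge 0$.

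It then remains to pass from this integrated estimate to a pointwise one. Choose $x_0,y_0$ to be Lebesgue points of $\nabla V$ (a full-Lebesgue, hence full-$\mu$, set). As $r_1,r_2\downarrow 0$ with $\mu(B_1)=\mu(B_2)=\alpha$ kept balanced, the approximate continuity of $\nabla V$ at $x_0$ and $y_0$, combined with Lebesgue differentiation for the density of $\mu$, yields $\nabla V(x)\to\nabla V(x_0)$ and $\nabla V(T(x))\to\nabla V(y_0)$ in $\mu$-measure on $B_1$, while $T(x)-x\to y_0-x_0$ uniformly. Hence $\alpha^{-1}\int_{B_1}h(x,T(x))\,d\mu\to h(x_0,y_0)$, forcing $h(x_0,y_0)\ge 0$. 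Since this holds at almost every pair $(x_0,y_0)$, we deduce that $\nabla V$ is monotone a.e.\ on $\R^n$, which for a locally Lipschitz $V$ is equivalent to convexity. The main technical hurdle is precisely this limiting step: one must control, in $\mu$-measure, the small Lebesgue-measure sets where $\nabla V$ deviates from its target values, which is where regularity of the density of $\mu$ near $x_0$ and $y_0$ (via Lusin's theorem or density points of $\rho$) is needed to convert Lebesgue-density estimates into $\mu$-density estimates.
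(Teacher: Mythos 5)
Your converse direction coincides with the paper's. For the main implication you replace the paper's one--line appeal to the $\costV$-cyclical monotonicity of the support of an optimal plan by an explicit family of competitor couplings (swapping the mass of two small balls of equal $\mu$-measure) followed by a blow-up. This is a legitimate, more hands-on route, and the swap computation giving $\int_{B_1} h(x,T(x))\,d\mu\ge 0$ with $h(x,y)=(\nabla V(y)-\nabla V(x))\cdot(y-x)$ is correct.

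The gap is in the limiting step, and it is more serious than the Lebesgue-to-$\mu$ conversion you flag. Your blow-up requires $x_0,y_0$ to be Lebesgue points of the density $\rho$ of $\mu$ at which $\rho>0$; otherwise $\mu(B_1)=o(|B_1|)$ and the normalized $\mu$-averages of $\nabla V$ over $B_1$ need not converge to $\nabla V(x_0)$. The set $\{\rho>0\}$ has full $\mu$-measure but, under the hypotheses (absolutely continuous with support $\R^n$), it need not have full \emph{Lebesgue} measure: $\rho$ may vanish on a fat Cantor-type set $K$ with $|K|>0$ and empty interior without shrinking the support. So your argument yields $h(x_0,y_0)\ge 0$ only for $\mu\otimes\mu$-almost every pair, i.e.\ on $G\times G$ with $G$ dense and of full $\mu$-measure, and this is strictly weaker than what the conclusion requires. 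Monotonicity of $\nabla V$ on a dense set does not imply convexity: in dimension one, $V(x)=\int_0^x f$ with $f(t)=t$ off a fat Cantor set $K$ and $f(t)=t-100$ on $K$ has $f$ nondecreasing on the dense open set $\R\setminus K$, yet $V$ is not convex. Moreover the final step (monotone a.e.\ implies convex), which you assert and the paper proves by mollification, genuinely needs the inequality at Lebesgue-a.e.\ pairs, since $(\nabla V_\eps(y)-\nabla V_\eps(x))\cdot(y-x)=\int h(x-z,y-z)\,\eta_\eps(z)\,dz$. The paper sidesteps all of this: the support of the diagonal coupling is closed, hence by the full-support assumption it is the \emph{entire} diagonal, and cyclical monotonicity of that support gives $h(x,y)\ge 0$ at every pair where $\nabla V$ exists. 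To repair your version you would need either to assume $\rho$ locally bounded below, or to supply a separate argument upgrading monotonicity on a dense, full-$\mu$-measure set to convexity.
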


\begin{proof}
Since $\cost(x,x)=0$, if $\mathcal W_{\cost}(\mu, \mu)=0$ then it means that the image $\pi$ of $\mu$ by the map $x \to (x,x)$ is an optimal coupling, and therefore its support is $\cost_V$-cyclically monotone. By the assumption on $\mu$, this implies that for (almost) all $x,y \in \R^n$ we have $\cost_V(x,y) + \cost_V(y,x) \ge \cost_V(x,x) + \cost_V(y,y) $, which rewrites as
\begin{equation}\label{monotony}
\big(\nabla V(y) - \nabla V(x) \big)\cdot (y-x) \ge 0.
\end{equation}
For a locally Lipschitz function (therefore also $W^{1,1}_{loc}$), this property implies that $V$ is convex. Indeed, we can consider $V_\epsilon = V \ast \eta_\epsilon$ where $\eta_\epsilon(x) = \epsilon^{-n} \eta(x/\epsilon)$ is an approximation of the identity in $\R^n$, with $\eta$ compactly supported. Then the property~\eqref{monotony} passes to $V_\epsilon$, which is now smooth, so that this property holds at every $(x,y)$ and this implies that $V_\epsilon$ is convex on $\R^n$, because it implies that the restriction of $V_\epsilon$ to any affine line has a nondecreasing derivative. We conclude by using that $V_\epsilon$ converges to $V$, point-wise as $\epsilon\to 0$. 

Conversely, the fact that $\cost_V(x,x)=0$ implies that $ \mathcal W_{\cost_V}(\mu, \mu)\le 0$ for any  absolutely continuous probability measure $\mu$. Since $\cost_V\ge 0$ when $V$ is convex, we get in this case that $ \mathcal W_{\cost_V}(\mu, \mu)= 0$. (One can also verify that when $V$ is convex, the set $\{(x,x)\; ; \ x\in \R^n\}$ is $\cost_V$-cyclically monotone.)

\end{proof}

We now have all the ingredients for the study of equality cases. If there is equality in~\eqref{ineq:thm1} for some $\nu$, then by Lemma~\ref{translation} there must be equality for any translated measure $T_v \nu$, $v\in \R^n$. But for  $v:= -\int x \, d\nu + \int x \, d\mu_V$ we have the centering condition $\int x\, dT_v \nu = \int x \, d\mu_V(x)$, and so we must have that $W_1(\mu_V, T_v \nu)=0$, that is $T_v \nu = \mu_V$ or equivalently $\nu = T_{-v} \mu_V$. This shows that for equality to hold,  $\nu$ must be a translate of $\mu_V$. 
But this in turn implies, again by Lemma~\ref{translation}, that there is also equality for $\nu=\mu_V$. Since $H(\mu_V \| \mu_V)=0$, we must have $\mathcal W_{\cost_V}(\mu_V, \mu_V)=0$. By Lemma~\ref{convexity_of_V} this implies that $V$ is convex.  

Conversely, if $V$ is convex, there is equality for $\nu=\mu$, because Lemma~\ref{convexity_of_V} ensures that $\mathcal W_{\cost_V}(\mu_V, \mu_V)=0$, and by Lemma~\ref{translation} we then have also equality for any translate of $\mu_V$.

\end{proof}

\section{Variance Brascamp-Lieb inequalities}

It is well known that linearization of transportation type inequalities give Poincar\'e type inequalities. One often uses the dual infimal convolution inequality~\eqref{ic} to perform the linearization, but one can do it also directly from the transportation inequality.  The procedure for linearizing the Wasserstein distance is standard, especially in the framework of the so-called ``Otto calculus" (see for instance~\cite{OV}). It is also known that  only the local behavior of the cost matters for linearizing a transport inequality (see for instance~\cite[Section 8.3]{GL}. However we did not find a reference for the precise situation studied here, and so we include for completeness the following statement.

\begin{lemma}\label{approx}
Let $\cost:\R^n \times \R^n \to \R^+$ be a function such that $\cost(y,y)=0$ and $\cost(x,y) \ge \delta_0 |x-y|^2$ for every $x,y\in \R^n$, for some $\delta_0>0$. Assume furthermore that for every $y$ there exists a nonnegative symmetric operator $H_y$ for which
$$\cost(y+h, y) =\frac12 H_y h \cdot h + |h|^2 o(1)$$ 
uniformly in $y$ on compact sets when $h\to 0$.  

Then, if $\mu$ is a probability measure on $\R^n$ and $g$ is a $C^1$ compactly supported function with $\int g \, d\mu =0$, we have
$$\liminf_{\eps \to 0} \frac1{\eps^2} \mathcal W_{\cost} (\mu, (1+\eps g)\, d\mu) \ge \frac12 \frac{\big(\int g  \, f\, d\mu \big)^2}{\int H^{-1}\nabla f\cdot \nabla f\, d\mu}$$
for any $C^1$ compactly supported function $f$. 
\end{lemma}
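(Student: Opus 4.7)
The plan is to lower-bound $\mathcal W_{\cost}(\mu,\nu_\eps)$, with $\nu_\eps:=(1+\eps g)\,d\mu$, via Kantorovich duality. For any continuous $\psi:\R^n\to\R$, the $\cost$-transform $\psi^\cost(x):=\inf_y\{\cost(x,y)-\psi(y)\}$ satisfies $\psi^\cost(x)+\psi(y)\le\cost(x,y)$, so
\[
\mathcal W_{\cost}(\mu,\nu_\eps)\ \ge\ \int\psi^\cost\,d\mu+\int\psi\,d\nu_\eps.
\]
I apply this inequality with $\psi:=\lambda\eps f$ for a real parameter $\lambda$ to be optimized at the very end, and expand both integrals to order $\eps^2$.

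The central step is the asymptotic analysis of $\psi^\cost(x)=\inf_h\{\cost(x,x-h)-\lambda\eps f(x-h)\}$. From $\cost(y+h,y)\ge\delta_0|h|^2$ one reads off $H_y\ge 2\delta_0\,\id$, so $H^{-1}$ is a uniformly bounded symmetric field. Combined with boundedness of $f$, this minorization forces any near-minimizer to lie in $|h|\le C\sqrt\eps$ uniformly in $x$; for $x$ outside a fixed neighborhood of $\mathrm{supp}(f)$ and $\eps$ small, the trivial choice $h=0$ is essentially optimal and gives $\psi^\cost(x)=0$. Inside a compact neighborhood of $\mathrm{supp}(f)$, the hypothesized Taylor expansion (applied with $y=x-h$) together with $f(x-h)=f(x)-\nabla f(x)\cdot h+o(|h|)$ and the continuity $y\mapsto H_y$ implicit in the uniform assumption let me replace $H_{x-h}$ by $H_x$ up to $o(\eps^2)$. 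Solving the strictly convex quadratic minimization $\inf_h\{\tfrac12 H_x h\cdot h+\lambda\eps\nabla f(x)\cdot h\}$ explicitly gives, uniformly on compacts,
\[
\psi^\cost(x)=-\lambda\eps f(x)-\tfrac12\lambda^2\eps^2\,H_x^{-1}\nabla f(x)\cdot\nabla f(x)+o(\eps^2),
\]
the true minimizer being $h_*(x)=-\lambda\eps H_x^{-1}\nabla f(x)$ of size $O(\eps)$.

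Integrating against $\mu$ and using $\int\psi\,d\nu_\eps=\lambda\eps\int f\,d\mu+\lambda\eps^2\int fg\,d\mu$, the $O(\eps)$ terms cancel and the duality bound becomes
\[
\mathcal W_{\cost}(\mu,\nu_\eps)\ \ge\ \lambda\eps^2\!\int\! fg\,d\mu-\tfrac12\lambda^2\eps^2\!\int\! H^{-1}\nabla f\cdot\nabla f\,d\mu+o(\eps^2).
\]
Dividing by $\eps^2$, passing to $\liminf$, and optimizing over $\lambda$ (the optimum is $\lambda_*=\int fg\,d\mu\big/\!\int H^{-1}\nabla f\cdot\nabla f\,d\mu$; the statement is trivial if the denominator is infinite) yields the announced inequality. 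The main obstacle is the uniform control of the $o(\eps^2)$ error after integrating $\psi^\cost$ against $\mu$: one must verify that the pointwise remainder is $o(\eps^2)$ uniformly in $x$ on a compact neighborhood of $\mathrm{supp}(f)$ and is dominated by a fixed $L^1(\mu)$ majorant, so that $\liminf$ and integration may be interchanged. Given the global bound $H^{-1}\le(2\delta_0)^{-1}\id$ and the compact support of $f$ and $\nabla f$, this reduces to standard but careful real-analysis bookkeeping.
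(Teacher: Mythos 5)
Your overall strategy is the same as the paper's: bound $\mathcal W_{\cost}(\mu,\nu_\eps)$ from below by a Kantorovich duality pair, expand the $\cost$-transform to second order using the quadratic minimization $\inf_h\{\eps\lambda\nabla f\cdot h+\frac12 H h\cdot h\}=-\frac{\lambda^2\eps^2}{2}H^{-1}\nabla f\cdot\nabla f$, observe the first-order terms cancel, and optimize over $\lambda$ at the end. The $O(\eps)$ bound on the minimizer via $\cost\ge\delta_0|x-y|^2$ and the Lipschitz bound on $f$, and the treatment of points away from $\mathrm{supp}(f)$, also match the paper.

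There is, however, one genuine gap, caused by your choice of which transform to expand. The hypothesis gives a second-order expansion of $\cost$ \emph{around its second argument}: $\cost(y+h,y)=\frac12 H_y h\cdot h+|h|^2 o(1)$, with the matrix $H_y$ attached to the base point $y$. You take the transform in the source variable, $\psi^{\cost}(x)=\inf_y\{\cost(x,y)-\psi(y)\}$, and substitute $y=x-h$, so the cost term is $\cost((x-h)+h,\,x-h)=\frac12 H_{x-h}h\cdot h+|h|^2o(1)$. Replacing $H_{x-h}$ by $H_x$ costs $\frac12(H_{x-h}-H_x)h\cdot h$, which with $|h|=O(\eps)$ is only $O(\eps^2)$, \emph{not} $o(\eps^2)$, unless $y\mapsto H_y$ is continuous. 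You assert that this continuity is ``implicit in the uniform assumption,'' but it is not: take $\cost(x,y)=a(y)|x-y|^2$ with $a\ge\delta_0$ bounded and everywhere discontinuous; the expansion holds exactly (zero error, hence uniformly) with $H_y=2a(y)\id$ discontinuous, and your substitution step fails (the infimum over $y$ can exploit nearby points where $a$ is smaller, degrading the constant). The fix is exactly what the paper does: use the transform in the target variable, $Q_{\cost}(F)(y)=\inf_x\{F(x)+\cost(x,y)\}$, so that writing $x=y+h$ produces $\cost(y+h,y)$ with $H_y$ evaluated at the fixed point $y$, and pair it as $\mathcal W_{\cost}(\mu,\nu_\eps)\ge\int Q_{\cost}(\eps\lambda f)\,d\nu_\eps-\eps\lambda\int f\,d\mu$. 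With that single change (or with an added continuity assumption on $H$, which does hold in the application where $H_y=D^2V(y)$), the rest of your argument, including the uniform-on-compacts control of the remainder needed to interchange $\liminf$ and integration, goes through as in the paper.
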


\begin{proof}
Given a (bounded) function $F$ on $\R^n$,  we introduce its  infimal convolution~\eqref{defic}  associated to our cost $\cost$ which satisfies: for every $(x,y) \in \R^n \times \R^n$, $Q_{\cost} (F)(y) - F(x)  \le \cost(x,y) $. It then follows from the definition of $\mathcal W_{\cost}$ that
$$\mathcal W_{\cost}(\mu, \nu) \ge \int Q_{\cost} (F)  \, d\nu - \int F\, d\mu.$$
In our situation where $\nu=(1+\eps g)\, d\mu$, ($\eps$ small enough and later tending to $0$) we pick $F=\eps f$ with $f$  of class $C^1$  compactly supported and $\int f\, d\mu =0$. Let us write
$$Q_c(\eps f)(y) =  \inf_{x} \big\{ \eps f(x) + \cost(x,y)\big\} = \inf_{h} \big\{ \eps f(y+h) + \cost(y+h,y)\big\}.$$
For any given $y$, let $h_\eps = h_{\eps, y}$ be a point where this infimum is achieved. Since the function $f$ is Lipschitz, of constant $M>0$ say, we have by our assumption on the cost that
$$\eps f(y)-\eps M |h_\eps| +\delta_0 |h_\eps|^2 \le\eps f(y+h_\eps)+c(y+h_\eps,y)\le \eps f(y) $$
so that 
$$|h_\eps |\le \frac{M}{\delta_0} \eps . $$
In other words,  $h_\eps$ tends to zero like $\eps$ uniformly in $y$. Also, since $f$ is continuous compactly supported, we can find (because the cost is nonnegative and large when points are far-apart) a bounded open set $\Omega$, which contains the support of $f$, such that $Q_{\cost} (\eps f)(y) \ge 0$ for every $y \in \R^n \setminus\Omega$. Consequently, we have
$$\mathcal W_{\cost}(\mu, (1+\eps g)\, d\mu) \ge  \int Q_{\cost} (\eps f)   \, (1+\eps g)\, d\mu \ge  \int_{\Omega} Q_{\cost} (\eps f)   \, (1+\eps g)\, d\mu.$$
We have, uniformly for $y$ in the bounded set $\Omega$,
$$ Q_{\cost} (\eps f) (y) = \eps f(y+h_\eps)+c(y+h_\eps,y) =  \eps f (y) + \eps \nabla f (y) \cdot h_\eps + \frac12 H_{y} h_\eps \cdot h_\eps  + o(\eps^2)$$
and so
$$ Q_{\cost} (\eps f) (y) \ge  \eps f (y) - \eps^2 \frac12 H_{y}^{-1} \nabla f (y) \cdot \nabla f (y)  + o(\eps^2).$$
After multiplying by $(1+\eps g)$, we can integrate on $\Omega$ using that the $o(\eps^2)$ is uniform in $y$:
$$\frac1{\eps^2}\mathcal W_{\cost}(\mu, (1+\eps g)\, d\mu) \ge  \int_\Omega f\, g \, d\mu -  \frac12 \int_\Omega  H_{y}^{-1} \nabla f \cdot \nabla f \, d\mu \; + \; o(1) $$
This implies, using that $\Omega$ contains the support of $f$, that 
$$\liminf_{\eps \to 0} \frac1{\eps^2} \mathcal W_{\cost} (\mu, (1+\eps g)\, d\mu) \ge   \int f\, g \, d\mu -  \frac12 \int  H_{y}^{-1} \nabla f \cdot \nabla f \, d\mu .$$
The result follows by homogeneity (replacing $f$ by $\lambda f$ and optimizing). 
\end{proof}

The linearization given in Lemma~\ref{approx} shows that the Brascamp-Lieb inequality follows immediately from Proposition~\ref{thm1} for $\nu =(1+\eps g) d\mu_V$ with $\int gd\mu_V=0$, when $\eps\to 0$. Indeed, without loss of generality we can assume that $D^2 V\ge 2\delta_0$ (by adding a small $\delta_0 |x|^2$ and later making $\delta_0\to 0$) so that the cost verifies also $\costV(x,y) \ge \delta_0 |x-y|^2$. Moreover, if $V$ is $C^2$ we see from the definition~\eqref{defcost2} of the cost that, when $h\to 0$, 
$$ \costV(y+h,y) - \frac12 D^2 V(y) h\cdot h = \int_0^1 \Big[ D^2 V(y+(1-t)h)  - D^2V(y)\Big]h \cdot h \, (1-t) \, dt =  |h|^2 o(1)$$
where the $o(1)$ uniform in $y$ on compact sets, since $D^2 V$ is uniformly continuous on compact sets. On the other hand, if $g$ is a $C^1$ compactly supported function with $\int g\, d\mu_V=0$, we have, 
$$H((1+\eps g) d\mu_V |\mu_V) = \frac12 \eps^2 \int g^2 \, d\mu_V + o(\eps^2).$$
So we find, by applying Proposition~\ref{thm1}  with $\nu= (1+\eps g) d\mu_V $ and the Lemma above with the choice $f=g$ (which is the optimal one in the present situation), at the limit, that
$$ \frac12 \frac{\big(\int g^2 \, d\mu_V \big)^2}{\int (D^2V(x))^{-1}\nabla g\cdot \nabla g\, d\mu_V} \le  \frac12 \int g^2 \, d\mu_V$$
which is the Brascamp-Lieb inequality~\eqref{hbl}. 

Let us apply the same procedure with inequality~\eqref{ineq:thm2} in Theorem~\ref{thm2}, the crucial point being that $ \costmin(h(\mu_V)\,|(y+h)-y| )$ behaves like $h(\mu_V)^2 |h|^2$ for $h$ small. So the cost satisfies, for $h\to 0$, 
\begin{multline*}
\ncost_V (y+h,y) = \cost_V (y+h,y) + c\, \costmin(h(\mu_V)\,|h| )=
\cost_V (y+h,y) + c\, h(\mu_V)^2 |h|^2 + o(h^2) \\ =  \frac12 \Big[ D^2 V(y) h\cdot h + 2c\, h(\mu_V)^2 \id \Big]h \cdot h + o(h^2) 
\end{multline*}
 where $c$ is a numerical constant. The same argument as before for $\nu = (1+\eps g )d\mu_V $ shows that if $g$ is a $C^1$ compactly supported function with $\int g\, d\mu_V = 0$ and
$$\int x g(x) \,  d\mu_V(x) = 0$$
we have
$$\int g^2 \, d\mu_V \le  \int \big( D^2V+c\, h(\mu_V)^2 \id)^{-1}\nabla g \cdot \nabla g \, d\mu_V$$
as claimed in Proposition~\ref{propRBL}. 

Finally, let us derive~Proposition~\ref{propQBL}. With the notation of the Proposition, for given $g$, denote $g_0:= g -\nabla V\cdot v_0 - c_0$. It is readily checked  by elementary calculus that for every vector $v_0$ and constant $c_0$ (so not only for the ones we have picked), if $g= g_0 +\nabla V\cdot v_0 + c_0$,
\begin{multline*}
A:= \int (D^2V(x))^{-1} \nabla g(x) \cdot \nabla g(x) \, d\mu_V(x) - \Var_{\mu_V} (g) \\
= \int (D^2V(x))^{-1} \nabla g_0(x) \cdot \nabla g_0(x) \, d\mu_V(x) - \Var_{\mu_V} (g_0) .
\end{multline*}
Next, for our choice of $v_0$ and $c_0$ observe that $\int g_0 \, d\mu_V=0$ and
$$\int x\, g_0(x) \, d\mu(x) =0,$$
since, in the standard basis, writing $x_j=x\cdot e_j$ for $j=1, \ldots, n$, we have
$$\int  x_j \nabla V(x)\cdot v_0\,  d\mu_V(x) = \int e_j \cdot v_0 \, d\mu_V = e_j \cdot v_0  = \int x_j (g(x)-c_0) \, d\mu_V(x).  $$
So by Proposition~\ref{propRBL} we find
\begin{eqnarray*}
A &\ge  &\int (D^2V)^{-1} \nabla g_0 \cdot \nabla g_0 \, d\mu_V -  \int \big( D^2V+ c \lambda(\mu_V)\id)^{-1}\nabla g_0 \cdot \nabla g_0 \, d\mu_V\\
&=&  c\lambda(\mu_V)\int (D^2V)^{-1} \big( D^2V+ c \lambda(\mu_V)\id)^{-1}\nabla g_0 \cdot \nabla g_0 \, d\mu_V\\
\end{eqnarray*}
From this bound, we can proceed in two different ways. First, we can use a uniform lower bound and combine it with the Brascamp-Lieb inequality,
$$A \ge \frac{c \lambda(\mu_V)}{\sup_x  \lambda_{\rm max}(x) + c \lambda(\mu_V)}  \int (D^2 V)^{-1}\nabla g_0 \cdot \nabla g_0 \, d\mu_V
\ge \frac{c \lambda(\mu_V)}{\sup _x   \lambda_{\rm max}(x) + c \lambda(\mu_V)}  \int |g_0|^2 \, d\mu_V $$ 
Otherwise, using again that $D^2 V \le \lambda_{\rm max} \id$, we can use H\"older's inequality, to arrive to
$$ 
A \ge  \frac{\big(\int |\nabla g_0| \, d\mu_V\big)^2}{\int  \lambda_{\rm max} (  \lambda_{\rm max} + c \lambda(\mu_V)) \, d\mu_V} .
 $$
But~\eqref{equivcheegerpoincare} implies that  $\int |\nabla g_0| \, d\mu_V \ge c \sqrt{\lambda(\mu_V)} \int |g_0| \, d\mu_V$ 
for some numerical constant $c>0$. This ends the proof of Proposition~\ref{propQBL}.

\vskip1cm 
\noindent Dario Cordero-Erausquin \\
  Institut de Math\'ematiques de Jussieu, \\ Universit\'e Pierre et Marie Curie - Paris 6, \\
  75252 Paris Cedex 05, France \\
 \verb?dario.cordero@imj-prg.fr?


\begin{thebibliography}{99}

\bibitem{BK}
Barthe, F., and Kolesnikov, A. V.,
{\it Mass transport and variants of the logarithmic Sobolev inequality},
J. Geom. Anal. 18 (2008), no. 4, 921--979. 

\bibitem{BC}
Barthe, F., and Cordero-Erausquin, D.,
{\it Invariances in variance estimates},
Proc. Lond. Math. Soc. (3) 106 (2013), 33--64. 

\bibitem{blower}
Blower, G., {\it The Gaussian isoperimetric inequality and transportation}, Positivity 7 (2003), 203--224.

\bibitem{BH}
 Bobkov, S. G., and Houdr\'e, C.,
 \emph{Some connections between isoperimetric and Sobolev-type inequalities},  Mem. Amer. Math. Soc. 129 (1997).

\bibitem{BL}
Bobkov, S. G., and  Ledoux, M., \emph{From Brunn-Minkowski to Brascamp-Lieb and to logarithmic Sobolev inequalities}, 
 Geom. Funct. Anal. 10 (2000), no. 5, 1028--1052. 

\bibitem{BGRS}
S. Bobkov, N. Gozlan, Roberto, C., and  Samson, P.-M.,
{\it Bounds on the deficit in the logarithmic Sobolev inequality}
Journal of Funct. Anal. 267 (2014), no. 11, 4110--4138.

\bibitem{BGG}
Bolley, F., Gentil, I., and Guillin, A., {Dimensional improvements of the logarithmic Sobolev, Talagrand and
Brascamp-Lieb inequalities}, Preprint, 2015. 

\bibitem{BL:1976}  Brascamp, H.~J., and  Lieb, E.~H., {\it On extensions of the
Brunn-Minkowski and Pr\'ekopa-Leindler theorems, including
inequalities for log concave functions, and with an application to
the diffusion equation.}, J. Funct. Anal. \textbf{22} (1976), no. 4,
366–-389.


\bibitem{brenier}  Brenier, Y., {\it Polar factorization and monotone rearrangement of vector-valued functions. }
Comm. Pure Appl. Math. 44, no. 4, (1991), 375–-417.

\bibitem{cordero} Cordero-Erausquin, D., {\it Some applications of mass transport to Gaussian-type inequalities. }
Arch. Ration. Mech. Anal., Vol. 161, no. 3, (2002), 257–-269.

\bibitem{CK}  Cordero-Erausquin, D. and Klartag, B.,
\emph{Moment measures}, to appear in J. Funct. Anal. 

\bibitem{CG} Cordero-Erausquin, D., and Gozlan, N.,
{\it Transport proofs of weighted Poincar\'e inequalities for log-concave distributions}, Preprint (20014). 


\bibitem{EG}  Evans, L. C., Gariepy, R. F., {\it Measure theory and fine properties of functions. }
Studies in Advanced Mathematics,  CRC Press, Boca Raton, FL, 1992.

\bibitem{FIL}
Fathi, M., Indrei, E. and Ledoux, M.,
{\it Quantitative logarithmic Sobolev inequalities and stability estimates}, preprint, 2015.

\bibitem{FMP}
Figalli, A., Maggi, F. and Pratelli, A.
{\it  A refined Brunn-Minkowski inequality for convex sets},
 Ann. Inst. H. Poincar\'e Anal. Non Lin\'eaire 26 (2009), 2511--2519.

\bibitem{FdF}
De Philippis, G., and  Figalli, A., 
{\it The Monge-Ampère equation and its link to optimal transportation}
Bull. Amer. Math. Soc. (N.S.) 51 (2014),  527--580. 

\bibitem{gentil}
Gentil, I.,
\emph{From the Pr\'ekopa-Leindler inequality to modified logarithmic Sobolev inequality},
Ann. Fac. Sci. Toulouse Math. (6) 17 (2008), 291--308. 


\bibitem{GL}
 Gozlan N. and  L\'eonard C., \emph{Transport inequalities. A survey}, Markov Process. Related Fields 16 (2010),  635--736.
 
 \bibitem{harge}
 Harg\'e, G., 
 {\it Reinforcement of an inequality due to Brascamp and Lieb}, 
  J. Funct. Anal. 254 (2008),  267--300.


\bibitem{klartag14}
Klartag, B.,
{\it Concentration of measures supported on the cube},  Israel J. of Math. 203 (2014), 59--80.

\bibitem{ledoux}
 Ledoux, M., \emph{The concentration of measure phenomenon},  Mathematical Surveys and Monographs, 89. A.M.S., Providence, RI, 2001.

\bibitem{ledoux04}
 Ledoux, M., \emph{Spectral gap, logarithmic Sobolev constant, and geometric bounds}, in Surv. Differ. Geom., IX, p.219--240, Int. Press, Somerville, MA, 2004. 

\bibitem{mccann95}
McCann, R. J., {\it  Existence and uniqueness of monotone measure-preserving maps.}
Duke Math. J.,  Vol. 80, (1995), 309--323.

\bibitem{mccann}  McCann, R. J., {\it A convexity principle for interacting gases.} Adv. Math., Vol. 128, (1997),
153–-179.

\bibitem{mooney}
Mooney, C.,
{\it Partial regularity for singular solutions to the Monge-Amp\`ere equation},
Comm. Pure Appl. Math. 68 (2015), 1066--1084. 

\bibitem{Emilman} 
Milman, E.,  {\it On the role of convexity in isoperimetry, spectral gap and concentration},  Invent. Math. 177 (2009), no. 1, 1--43.

\bibitem{MK}  
Milman, E. and Kolesnikov A.V., 
{\it Poincar\'e and Brunn-Minkowski inequalities on weighted Riemannian manifolds with boundary}, preprint, 2014. 

\bibitem{OV}
Otto, F. and  Villani, C.,  {\it Generalization of an inequality by Talagrand and links with the
logarithmic Sobolev inequality}, J. Funct. Anal. 173 (2000), 361--400.

\bibitem{rockafellar}  Rockafellar, R. T., {\it Convex analysis},
Princeton Mathematical Series, No. 28. Princeton University Press, Princeton, NJ, 1970.

\bibitem{tala}
Talagrand, M., {\it Transportation cost for Gaussian and other product measures},  Geom. Funct. Anal. 6 (1996), 587--600.

\bibitem{veysseire}
Veysseire, L., {\it A harmonic mean bound for the spectral gap of the Laplacian on Riemannian manifolds},  C. R. Math. Acad. Sci. Paris 348 (2010), 1319--1322.

\bibitem{V}
Villani, C., {\it Optimal transport. Old and new}, Springer-Verlag, Berlin, 2009.

\end{thebibliography}
\end{document}